\newtheorem{theorem}{Theorem}[section]
\newtheorem{lemma}[theorem]{Lemma}
\theoremstyle{definition}
\newtheorem{definition}[theorem]{Definition}
\newtheorem{example}[theorem]{Example}
\theoremstyle{remark}
\newtheorem{remark}[theorem]{Remark}
\newtheorem{proposition}[theorem]{Proposition}
\newtheorem{reduction}{Reduction}[section]
\newtheorem{idea}{Idea}[section]
\newtheorem{problem}{Problem}[section]
\numberwithin{equation}{section}
\begin{document}

\title[On SAGBI bases theory of skew PBW extensions]{On SAGBI bases theory of skew Poincar\'e-Birkhoff-Witt extensions}

\author{Y\'esica Su\'arez}
\address{Universidad Nacional de Colombia - Sede Bogot\'a. Universidad Pedag\'ogica y Tecnol\'ogica de Colombia - Sede Tunja}
\curraddr{Campus Universitario}
\email{ypsuarezg@unal.edu.co}
\thanks{}

\author{Armando Reyes}
\address{Universidad Nacional de Colombia - Sede Bogot\'a}
\curraddr{Campus Universitario}
\email{mareyesv@unal.edu.co}

\thanks{The authors were supported by the research fund of Faculty of Science, Code HERMES 53880, Universidad Nacional de Colombia - Sede Bogot\'a, Colombia.}

\subjclass[2020]{13P10, 08A30, 16Z05, 16S36, 16S38}

\keywords{SAGBI basis, SAGBI normal form, skew PBW extension}

\date{}

\dedicatory{Dedicated to our dear Professor Oswaldo Lezama}

\begin{abstract}

In this paper we present a first approach toward a \texttt{SAGBI} bases theory of skew Poincar\'e-Birkhoff-Witt extensions, and investigate the problem of polynomial composition for \texttt{SAGBI} bases of subalgebras of these extensions.

\end{abstract}

\maketitle

\section{Introduction}

Gordan \cite{Gordan1900} introduced the idea of {\em Gr\"obner basis} in 1900 and sixty five years later these bases for commutative polynomial rings over fields were defined and developed by Buchberger in his Ph.D. Thesis \cite{Buchberger1965} under the direction of Gr\"obner. In this commutative setting, Shannon and Sweedler \cite{ShannonSweedler1989} solved the membership problem in subalgebras transforming it into a problem of ideals.  Almost at the same time, Kapur and Madlener \cite{KapurMadlener1989} presented a procedure to compute a canonical basis for finitely presented subalgebras. In that paper they moved away a bit from what was worked by Shannon and Sweedler \cite{ShannonSweedler1989}, generalizing the theory of the Buchberger's algorithm using the {\em term rewriting method}. Nevertheless, this direct method has a disadvantage because for some orders on indeterminates and terms, the completion algorithm may not terminate and thus generate an infinite canonical basis, something that does not happen in the approach of Shannon and Sweedler. One year later, Robbiano and Sweedler \cite{RobbianoSweedler1990} studied the analogue of Gr\"obner bases for subalgebras of commutative polynomial rings, and used for the first time the term \texttt{SAGBI} ({\em Subalgebras Analogue to Gr\"obner Basis for Ideals}). They showed explicitly that the \texttt{SAGBI} theory is not simply a formal translation of Buchberger's theory from ideals to subalgebras. The theory of Gr\"obner bases of ideals of a subalgebra in a polynomial ring was developed by Miller \cite{Miller1998}, while Lezama and Mar\'in \cite{LezamaMarin2009} used \texttt{SAGBI} bases to determine the equality of subalgebras based on the the ideas presented by Kreuzer and Robbiano \cite{KreuzerRobbiano2005}. 

In the noncommutative case, Nordbeck in his Ph.D. Thesis \cite{NordbeckPhD2001} (see also \cite{Nordbeck1998, Nordbeck2002})  studied \texttt{SAGBI} bases for subalgebras of the free associative algebra $\Bbbk\{ X\}$ in the set of indeterminates $X$ over a field $\Bbbk$. He defined the process of subalgebra reduction, formulated generalizations of the standard Gr\"obner bases techniques for the test whether bases are canonical and for the completion procedure of constructing canonical bases, and discussed the special case of homogeneous subalgebras. More recently, for the $G$-{\em algebras} introduced by Apel \cite{Apel1988} and studied by Levandovskyy in his Ph.D. thesis \cite{Levandovskyy2005}, Khan et al. \cite{Khanetal2019} established for the first time the theory of \texttt{SAGBI} bases for these algebras and developed a computational criterion for its construction.

Having in mind all above treatments, in this paper we present a first approach toward a \texttt{SAGBI} bases theory of the family of noncommutative rings known as {\em skew Poincar\'e-Birkhoff-Witt} ({\em SPBW} for short) {\em extensions} which were introduced by Gallego and Lezama \cite{GallegoLezama2010}. Our motivation is due to that Gallego in her Ph.D. Thesis \cite{Gallego2015PhD} (and related papers \cite{Gallego2016FG, GallegoLezama2010}) developed the Gr\"obner basis theory for skew PBW extensions, and studied different homological properties of projective modules over these extensions \cite{Gallego2016, GallegoLezama2017}, so that a natural task is to investigate the problem of \texttt{SAGBI} bases for these objects. This is our aim in this paper: present a first approach toward a theory of \texttt{SAGBI} bases for skew PBW extensions over $\Bbbk$-algebras. We will see that many of the proofs we give are straightforward modifications of the original proofs presented by Nordbeck, but we include some of them for the sake of completeness and because of the occasional differences.

The paper is organized as follows. In Section \ref{Freealgebras} we recall some preliminaries and results on \texttt{SAGBI} bases following Nordbeck's ideas \cite{Nordbeck1998, NordbeckPhD2001}. Section \ref{SAGBIbasesSPBW} contains the results of the paper. We start by considering some preliminaries on SPBW extensions, formulate the notion of {\em reduction} which is necessary in the characterization of \texttt{SAGBI} bases, and then establish an algorithm to find the \texttt{SAGBI} normal form of an element. Then, we define what a \texttt{SAGBI} basis is, and formulate a criterion to determine when a subset of a SPBW extension over a field is a \texttt{SAGBI} basis. In addition, we establish an algorithm to find a \texttt{SAGBI} basis from a subset contained in a subalgebra of a SPBW extension. We illustrate our results with examples concerning algebras appearing in noncommutative algebraic geometry. Finally, in Section \ref{SPBWBasesundercomposition} we investigate the problem of polynomial composition for \texttt{SAGBI} bases of subalgebras of SPBW extensions.

\section{Free algebras}\label{Freealgebras}

Nordbeck in his Ph.D. Thesis \cite{NordbeckPhD2001} (see also \cite{Nordbeck1998, Nordbeck2002}) studied \texttt{SAGBI} bases for subalgebras of the free associative algebra over a field $\Bbbk$. In this section, we recall his key ideas.

\subsection{Basic definitions}

Let $X = \left\{x_1, \ldots, x_n\right\}$ be a finite alphabet, and \linebreak $\Bbbk\left\{x_1, \ldots, x_n\right\}$ the {\em free associative algebra over} $\Bbbk$. The symbol $W$ means the set of all {\em words} in $X$ including the empty word 1 (i.e. $W=X^*$, the free monoid generated by $X$). For $H$ a subset of $\Bbbk \left\{x_1, \ldots, x_n\right\}$, the subalgebra $S$ of $\Bbbk \left\{x_1, \ldots, x_n\right\}$ generated by $H$ is denoted by $\Bbbk \{H\}$. In other words, {\em the elements of} $S$ {\em are the polynomials in the set of formal and noncommutative indeterminates} $H$ {\em viewed as elements of} $\Bbbk\left\{x_1, \ldots, x_n\right\}$. It is assumed that the coefficient of such monomials is the identity $1$. With this description, the monomials are not general words appearing in $\Bbbk \left\{x_1,  \ldots, x_n\right\}$. As it is clear, the elements of $\Bbbk \subset S$ correspond to the constant polynomials.

\begin{example}[{\cite[Example 1]{Nordbeck1998}}]\label{Nordbeck1998Example1}
    Consider the subset 
    $$
    H := \{ h_1 = yx^2 + 1,\ h_2 = 2xy - y,\ h_3 = yx\}
    $$ 
    
    of the free algebra $\Bbbk \{x, y\}$. An example of a monomial $m(H) \in \Bbbk \{X\}$ is given by 
    $$
    h_1 h_2 = (yx^2 + 1) (2xy - y) = 2yx^3y - yx^2y + 2xy - y.
    $$
\end{example}

For Nordbeck \cite{Nordbeck1998}, an {\em admissible order} $\prec$ on a set $W$ is a well-order preserving multiplication, that is, $ f \prec g $ implies $h f k \prec h g k$ for all $f, g, h, k \in W$, such that the smallest word is the unity 1. It follows that every infinite sequence $u_1 \succ u_2 \succ \dotsb \succ $ in $W$ stabilizes. He used the order \texttt{deglex} (see Example \ref{Monomialordersexamples}(ii) for its definition), so if terms with identical words are collected together using the operations over $\Bbbk$, then with every non-zero element $f\in \Bbbk \{x_1, \dotsc, x_n\}$ we can associate its {\em leading word} ${\rm lw}(f)$, that is, the word in $f$ that is larger (relative the order $\prec$) than every other word occurring in $f$. The {\em leading term} ${\rm lt}(f)$ of $f$ is the leading word times its coefficient. For instance, in Example \ref{Nordbeck1998Example1}, using \texttt{deglex} we get that
$$
{\rm lw}(m(H)) = yx^3y \quad {\rm and} \quad {\rm lt}(m(H)) = 2yx^3y.
$$ 

As expected, for $H \subseteq \Bbbk \{X\}$ we have that ${\rm lw}(H) := \left\{{\rm lw}(h) \mid h\in H\right\}$. 

\begin{definition}[{\cite[Definition 1]{Nordbeck1998}}]\label{Nordbeck1998Definition1}
Let $H \subset \Bbbk\{x_1, \dotsc, x_n\}$, and let $\sum\limits_{i=1}^t k_i m_i(H)$, $k_i \in \Bbbk,\ m_i \in \Bbbk\{ H\}$, be a sum of monomials. The {\em height} of the sum is given by 
$$
\max \left\{\operatorname{lw}(m_i(H)) \mid 1 \leq i \leq t\right\},
$$ 

where the maximum is taken relative to the order in $\Bbbk\left\{x_1, \ldots, x_n\right\}$. The {\em breadth} of the sum is the number of $i$'s such that $\operatorname{lw}(m_i(H))$ is equal to the height.
\end{definition}

As one can see, the leading word of $f = \sum\limits_{i=1}^t k_i m_i(H)$ can be smaller than the height of $\sum\limits_{i=1}^t k_i m_i(H)$. As a matter fact, this happens if and only if some words larger than ${\rm lw}(f)$ cancel in the sum, and the breadth of the sum is then necessarily at least two.

We present the definition of \texttt{SAGBI} basis in the case of free associative algebras (c.f. \cite[Definition 6.6.2]{KreuzerRobbiano2005}).

\begin{definition}[{\cite[Definition 2]{Nordbeck1998}}]\label{Nordbeck1998Definition2}
Let $S$ be a subalgebra of $\Bbbk\{x_1, \dotsc, x_n\}$. A subset $H\subset S$ is called a \texttt{SAGBI} basis for $S$ if for every $f\in S$, $f\neq 0$, there exists a monomial $m\in \Bbbk \{H\}$ such that ${\rm lw}(f) = {\rm lw}(m(H))$.
\end{definition}

By recalling that orders are preserved after multiplication, if 
$$
m(H) = h_{i_1} h_{i_2} \dotsb h_{i_t}
$$ 

with $h_{ij} \in H$, then 
$$
{\rm lw}(m(H)) = {\rm lw}(h_{i_1}) {\rm lw}(h_{i_2}) \dotsb {\rm lw}(h_{i_t}). 
$$

This means that an equivalent formulation of Definition \ref{Nordbeck1998Definition2} is that $H$ is a \texttt{SAGBI} basis if the leading word of every non-zero element in $S$ can be written as a product of leading words of elements in $H$. Notice that every subalgebra is a \texttt{SAGBI} basis for itself, so every subalgebra has a \texttt{SAGBI} basis. Also, if $H$ consists only of words (or terms), then $H$ is a \texttt{SAGBI} basis for the subalgebra $S$ generated by $H$ \cite[Example 2]{Nordbeck1998}.

An important fact is that the \texttt{SAGBI} property depends on which order we consider, as the following example illustrates (c.f. \cite[Example 6.6.7]{KreuzerRobbiano2005}).

\begin{example}[{\cite[Example 3]{Nordbeck1998}}]\label{Nordbeck1998Example3}
Let 
$$
H := \{h_1 = yz,\ h_2 = zy,\ h_3 = x-y\}\subseteq \Bbbk \{x, y, z\},
$$

and consider as $S$ the subalgebra generated by $H$ with the order \texttt{deglex} and $x \succ y \succ z$. Then ${\rm lw}(H) = \{yz, zy, x\}$, and it is easy to check (by using Proposition \ref{Nordbeck1998Proposition4} below) that $H$ is a \texttt{SAGBI} basis.

On the other hand, if we let $x\prec y \prec z$, then ${\rm lw}(H) = \{yz, zy, y\}$. However, 
$$
p(H) = h_1h_3 - h_3h_2 = yzx - xzy \in S
$$ 

and ${\rm lw}(p(H)) = yzx$ cannot be written as a product of words in ${\rm lw}(H)$. This means that $H$ is not a \texttt{SAGBI} basis for $S$.
\end{example}

Next, we present the process of reduction in \texttt{SAGBI} bases (c.f. \cite[Definition 6.6.16]{KreuzerRobbiano2005}).

\subsection{Reduction}

\begin{reduction}[{\cite[p. 141]{Nordbeck1998}}]\label{Nordbeck1998reduction}
The {\em reduction} of $f\in \Bbbk\{x_1, \dotsc, x_n\}$ over a subset $H\subset \Bbbk \{x_1, \dotsc, x_n\}$ is performed as follows:
\begin{enumerate}
    \item [\rm (1)] $f_0 = f$.
    \item [\rm (2)] If $f_i = 0$, or if there is no monomial $m \in \Bbbk \{H\}$ with ${\rm lw}(f_i) = {\rm lw}(m(H))$, then terminate. In case of termination this $f_i$ will be referred as the result of the reduction.
    \item [\rm (3)] Find a monomial $m_i \in \Bbbk \{H\}$ and $k_i \in \Bbbk$ such that ${\rm lt}(f_i) = {\rm lt}(k_i m_i(H))$. (This is possible since we have not terminated in step (2)). Now let $s_{i + 1} = s_i - k_im_i(H)$.
    \item [\rm (4)] Go to step (2) ($i + 1 \mapsto i$).
\end{enumerate}
\end{reduction}

When step (3) has been performed, the leading word of $s_{i+1}$ is strictly smaller than the leading word of $s_i$ (by the choice of $m_i$ and $k_i$). Now, since the order is well-founded, the reduction always terminates after a finite number of steps.

Notice that if $H$ is a finite set, then it is a constructive matter to determine whether a given word is a product of elements in ${\rm lw}(H)$, and hence the reduction is algorithmic. This is also the case if $H$ is infinite but sorted by e.g. the length of the leading words.

If the result (i.e. the last $f_i$) of a reduction of $f$ over $H$ is denoted $\overline{f^{H}}$, and if the reduction terminated after $t$ iterations of step (3) above, then 
\begin{equation}\label{Nordbeck1998(1)}
    f = \sum_{i = 0}^{t-1} k_im_i(H) + \overline{f^{H}}.
\end{equation}
If $t = 0$ the right hand side of (\ref{Nordbeck1998(1)}) is of course just $\overline{f^H}$. If step (3) is performed at least once we have, then
\begin{equation}\label{Nordbeck1998(2)}
    {\rm lw}(f) = {\rm lw}(m_0(H)) \succ {\rm lw}(m_1(H)) \succ \dotsb \succ {\rm lw}(m_{t-1}(H)),
\end{equation}
so the sum in the right hand side of (\ref{Nordbeck1998(1)}) is clearly of breadth one and height equal to ${\rm lw}(f)$. Nordbeck used these facts several times (in particular when $\overline{f^H} = 0$).

Notice that there are several different possibilities to choose the $m_i$'s in step (3), so the result of the reduction depends (in general) on how we choose these monomials.

It is interesting to consider the case when $\overline{s^H} = 0$ above. Nordbeck \cite[p. 142]{Nordbeck1998} said that $f$ {\em reduces to zero weakly over} $H$ if there exists one reduction (i. e. one choice of the $m_i$'s) with $\overline{s^H} = 0$, and that $s$ {\em reduces to zero strongly over} $H$ if every reduction (every choice) yields $\overline{s^H} = 0$. As he asserted, in most cases it does not matter which formulation we use, and we will then simply say that $s$ {\em reduces to zero over} $H$. By definition, $f = 0$ reduces to zero.

The following result is an application of \texttt{SAGBI} bases for the {\em Subalgebra Membership Problem}, that is, the decision whether an element $f\in \Bbbk \{x_1, \dotsc, x_n\}$ is in a given subalgebra (c.f. \cite[Theorem 6.6.25 (C1)]{KreuzerRobbiano2005}.

\begin{proposition}[{\cite[Proposition 1]{Nordbeck1998}}]\label{Nordbeck1998Proposition1}
    Let $H \subset \Bbbk \{x_1, \dotsc, x_n\}$ be a {\rm \texttt{SAGBI}} basis for the subalgebra $S$, and let $f \in \Bbbk \{x_1, \dotsc, x_n\}$. Then $f\in S$ if and only if $f$ reduces to zero over $H$.
\end{proposition}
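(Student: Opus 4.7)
The plan is to handle the two implications separately, with the forward direction proceeding by Noetherian induction on the leading word.

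For the easy direction ($\Leftarrow$), suppose $f$ reduces to zero over $H$, so $\overline{f^{H}}=0$. By equation (\ref{Nordbeck1998(1)}) we have $f=\sum_{i=0}^{t-1}k_{i}m_{i}(H)$, a $\Bbbk$-linear combination of monomials in $\Bbbk\{H\}$, hence $f\in S$.

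For the nontrivial direction ($\Rightarrow$), I would argue that $f\in S$ admits at least one reduction over $H$ that terminates at $0$ (the ``weak'' formulation). The argument goes by induction on $\operatorname{lw}(f)$, using that $\prec$ is a well-order on $W$. The base case $f=0$ is immediate by definition. For the inductive step, assume $f\in S$ is nonzero and that every nonzero element $g\in S$ with $\operatorname{lw}(g)\prec\operatorname{lw}(f)$ reduces to zero over $H$. Since $H$ is a \texttt{SAGBI} basis for $S$, Definition \ref{Nordbeck1998Definition2} supplies a monomial $m\in\Bbbk\{H\}$ with $\operatorname{lw}(m(H))=\operatorname{lw}(f)$; choosing $k\in\Bbbk$ so that $\operatorname{lt}(f)=\operatorname{lt}(k\,m(H))$, we can perform one step of Reduction \ref{Nordbeck1998reduction} to obtain
\[
f_{1} \;=\; f-k\,m(H),
\]
and by construction $\operatorname{lw}(f_{1})\prec\operatorname{lw}(f)$.

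The key observation, and the one thing that has to be verified carefully, is that $f_{1}$ still lies in $S$: this holds because $f\in S$ by hypothesis and $k\,m(H)\in S$ since $m(H)\in\Bbbk\{H\}\subset S$. Thus the induction hypothesis applies to $f_{1}$, giving a reduction of $f_{1}$ over $H$ whose result is $0$. Prepending to this the single step $f\mapsto f_{1}$ produces a reduction of $f$ over $H$ with $\overline{f^{H}}=0$, completing the induction. The main (mild) obstacle is simply noting that the membership $f_{1}\in S$ is what keeps the inductive hypothesis applicable at each step; everything else follows from the well-foundedness of $\prec$ and the defining property of a \texttt{SAGBI} basis.
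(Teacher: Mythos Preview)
Your proof is correct. The paper does not itself prove this cited proposition, but it does prove the analogous statement for SPBW extensions (Proposition~\ref{prop}), and the argument there is essentially the same as yours: the backward direction is identical, and for the forward direction the paper observes that a nonzero normal form would be an element of $S$ whose leading monomial is not of the form $\operatorname{lm}(m(F))$, contradicting the \texttt{SAGBI} property. This contradiction argument is just your Noetherian induction rephrased---both rest on the same key fact that each intermediate $f_i$ remains in $S$---though the paper's formulation has the minor advantage of immediately yielding the \emph{strong} version (every reduction terminates at zero), whereas your induction as written produces one successful reduction and would need a one-line remark to upgrade to the strong statement.
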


As expected, if $H$ is a \texttt{SAGBI} basis for the subalgebra $S$, then $H$ generates $S$ \cite[Corollary 1]{Nordbeck1998}. We will simply say that $H\subset \Bbbk \{x_1, \dotsc, x_n\}$ is a \texttt{SAGBI} basis meaning that $H$ is a \texttt{SAGBI} basis for the subalgebra of $\Bbbk \{x_1, \dotsc, x_n\}$ generated by $H$.

\begin{remark}
    Notice that an arbitrary element of $\Bbbk \{x_1, \dotsc, x_n\}$ has not in general a unique result of reduction over a \texttt{SAGBI} basis $H$. The uniqueness can be obtained by modifying step (2) above. Instead of terminating when the leading term no longer can be written as a product of ${\rm lw}(H)$, we move this term to some kind of remainder (generalizing $\overline{f^H}$) and continue with the other terms.
\end{remark}

\subsection{\texttt{SAGBI} basis criterion}\label{Nordbeck1998Section3}

We start with the following proposition that gives a first method to test the \texttt{SAGBI} basis property (c.f.  \cite[Proposition 6.6.28]{KreuzerRobbiano2005}).

\begin{proposition}[{\cite[Proposition 2]{Nordbeck1998}}]\label{Nordbeck1998Proposition2}
    $H \subset S$ is a {\rm \texttt{SAGBI}} basis for the subalgebra $S$ if and only if every $f\in S$ reduces to zero over $H$.
\end{proposition}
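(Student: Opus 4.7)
The plan is to split the biconditional into two implications and observe that both are essentially immediate consequences of what has already been established.

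For the \emph{forward direction} ($\Rightarrow$), I would simply invoke Proposition \ref{Nordbeck1998Proposition1}. Indeed, if $H$ is a \texttt{SAGBI} basis for $S$, then by that proposition, for every $f \in S$ we have $f \in S$ if and only if $f$ reduces to zero over $H$. Since every $f \in S$ trivially belongs to $S$, every element of $S$ reduces to zero over $H$. No further argument is needed.

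For the \emph{backward direction} ($\Leftarrow$), the idea is to verify Definition \ref{Nordbeck1998Definition2} directly by inspecting the first iteration of Reduction \ref{Nordbeck1998reduction}. Take an arbitrary nonzero $f \in S$. By hypothesis, $f$ reduces to zero over $H$, so there is a reduction sequence that terminates with result $0$. Because $f_0 = f \neq 0$, the procedure cannot terminate at step (2) with $i = 0$; hence step (3) must be executed at least once. The very execution of step (3) supplies a monomial $m_0 \in \Bbbk\{H\}$ and a scalar $k_0 \in \Bbbk$ with $\operatorname{lt}(f) = \operatorname{lt}(k_0 m_0(H))$, so in particular $\operatorname{lw}(f) = \operatorname{lw}(m_0(H))$. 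This is exactly the condition required by Definition \ref{Nordbeck1998Definition2}, so $H$ is a \texttt{SAGBI} basis for $S$.

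I do not anticipate a real obstacle here; the proof is essentially an unpacking of definitions. The only point deserving care is the weak/strong distinction in "reduces to zero": for ($\Leftarrow$) we only need that \emph{some} reduction sequence of each $f \in S$ terminates at zero, and this weak formulation is precisely what Proposition \ref{Nordbeck1998Proposition1} guarantees on the other side, so the two directions are compatible. This proposition will serve primarily as a bridge, reformulating the \texttt{SAGBI} basis property as a purely reduction-theoretic condition that is more convenient for the criterion to be developed in later sections.
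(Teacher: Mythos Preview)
Your proof is correct. Note, however, that the paper does not actually supply its own proof of this proposition: it is stated in the preliminary Section~\ref{Freealgebras} as a citation of \cite[Proposition 2]{Nordbeck1998}, with no argument given. Your approach---invoking Proposition~\ref{Nordbeck1998Proposition1} for the forward direction and reading off the existence of $m_0$ from the first execution of step~(3) in Reduction~\ref{Nordbeck1998reduction} for the converse---is the natural one and matches the standard argument.
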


Definition \ref{Nordbeck1998Definition3} and Proposition \ref{Nordbeck1998Proposition3} allow us to reduce the number of elements we need to consider for the \texttt{SAGBI} basis test. The definition of a $T$-polynomial is as expected from \cite[Definition 6.6.26]{KreuzerRobbiano2005} and \cite[Definition 2]{Nordbeck2002}.

\begin{definition}[{\cite[Definition 3]{Nordbeck1998}}]\label{Nordbeck1998Definition3}
Let $H$ be a subset of $\Bbbk \{x_1, \dotsc, x_n\}$. A critical pair $(m(H), m'(H))$ of $H$ is a pair of monomials $m(H), m'(H)\in \Bbbk \{x_1, \dotsc, x_n\}$ with ${\rm lw}(m(H)) = {\rm lw}(m'(H))$. If $k \in \Bbbk^{*}$ is such that ${\rm lt}(m(H)) = {\rm lt}(k m'(H))$ we define the $T$-{\em polynomial} of $(m(H), m'(H))$ as $$
T(m(H), m'(H)) = m(H) - km'(H). 
$$
\end{definition}

As in \cite[Definition 2]{Nordbeck2002}, the idea of the constant is that the leading words cancel in $T(m(H), m'(H))$, whence it follows that 
$$
{\rm lw}(T(m(H), m'(H))) \prec {\rm lw}(m(H)) = {\rm lw}(m'(H)).
$$ 

Note that 
$$
T(m(H), m'(H)) = k' T(m(H), m'(H)) \quad {\rm for\ some} \ k'\in \Bbbk, 
$$

and it can be seen that $T(m(H), m'(H))$ reduces to zero weakly (resp. strongly) if and only if $T(m(H), m'(H))$ does.

Proposition \ref{Nordbeck1998Proposition3} is the analogue of \cite[Theorem 6.6.25 (B1) and (C3)]{KreuzerRobbiano2005}, and \cite[Theorem 1]{Nordbeck2002}.

\begin{proposition}[{\cite[Proposition 3]{Nordbeck1998}}]\label{Nordbeck1998Proposition3}
$H$ is a {\rm \texttt{SAGBI}} basis if and only if the $T$-polynomials of all critical pairs of $H$ reduce to zero over $H$.
\end{proposition}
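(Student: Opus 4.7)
The plan is the standard Buchberger-style argument adapted to the \texttt{SAGBI} setting via Proposition~\ref{Nordbeck1998Proposition2}. For the forward direction $(\Rightarrow)$, suppose $H$ is a \texttt{SAGBI} basis for $S = \Bbbk\{H\}$. Every $T$-polynomial $T(m(H), m'(H)) = m(H) - k\, m'(H)$ is a $\Bbbk$-linear combination of monomials in $H$, so it lies in $S$, and Proposition~\ref{Nordbeck1998Proposition2} then gives that it reduces to zero over $H$.

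For the converse $(\Leftarrow)$, I would invoke Proposition~\ref{Nordbeck1998Proposition2} and prove that every nonzero $f \in S$ reduces to zero over $H$, arguing by well-founded induction on ${\rm lw}(f)$ under $\prec$. The crux is to produce a monomial $m \in \Bbbk\{H\}$ with ${\rm lw}(m(H)) = {\rm lw}(f)$; once such $m$ is available, one step of Reduction~\ref{Nordbeck1998reduction} yields $f' \in S$ with ${\rm lw}(f') \prec {\rm lw}(f)$, which reduces to zero by the inductive hypothesis, and concatenating the reductions handles $f$ itself.

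To produce such an $m$, write $f = \sum_{i=1}^{t} k_i m_i(H)$ with $k_i \in \Bbbk^{*}$ and $m_i \in \Bbbk\{H\}$, and among all such representations pick one of minimal height $w$, and among those of minimal breadth $b$. Clearly $w \succeq {\rm lw}(f)$, and the claim is that equality holds. If instead $w \succ {\rm lw}(f)$, the leading terms at height $w$ must cancel, forcing $b \geq 2$, so there exist $i \neq j$ with ${\rm lw}(m_i(H)) = {\rm lw}(m_j(H)) = w$. Then $(m_i(H), m_j(H))$ is a critical pair and, by hypothesis, its $T$-polynomial $T = m_i(H) - c\, m_j(H)$ reduces to zero over $H$. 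Equations \eqref{Nordbeck1998(1)}--\eqref{Nordbeck1998(2)} then express $T = \sum_\ell k'_\ell m'_\ell(H)$ with every ${\rm lw}(m'_\ell(H)) \preceq {\rm lw}(T) \prec w$. Substituting $k_i m_i(H) = k_i T + k_i c\, m_j(H)$ in the representation of $f$ replaces the pair $\{k_i m_i(H),\, k_j m_j(H)\}$ by $k_i T + (k_j + k_i c)\, m_j(H)$, producing a new representation of $f$ whose height is still at most $w$ but whose breadth at height $w$ has strictly decreased (by $1$ if $k_j + k_i c \neq 0$, by $2$ otherwise). This contradicts the minimality of $(w, b)$, so $w = {\rm lw}(f)$ as required.

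The main obstacle is the bookkeeping around the $T$-polynomial substitution: one must verify that expanding $T$ through its reduction genuinely lowers the breadth at height $w$ without inadvertently inserting new terms at or above $w$, which is exactly what the bound ${\rm lw}(m'_\ell(H)) \prec w$ guarantees. Everything else—the double minimization over height and breadth, and the well-founded induction on leading words—is then routine, and the weak-versus-strong reduction distinction noted by Nordbeck does not interfere since the reduction of $T$ used in the substitution is a single fixed reduction.
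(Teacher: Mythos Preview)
Your proof is correct and follows essentially the same Buchberger-style route as the paper's own argument for the analogous Theorem~\ref{sagbicriterio} (the paper does not reprove Proposition~\ref{Nordbeck1998Proposition3} itself, only cites it). The one cosmetic difference is that you minimize the pair (height, breadth) lexicographically, whereas the paper minimizes only the height and then simplifies by assuming exactly two monomials sit at the top; your bookkeeping is slightly more careful but the mechanism is identical. One small redundancy: once you have shown that every nonzero $f\in S$ admits a monomial $m$ with ${\rm lw}(m(H))={\rm lw}(f)$, you have already verified Definition~\ref{Nordbeck1998Definition2} directly, so the outer well-founded induction via Proposition~\ref{Nordbeck1998Proposition2} is not actually needed (though it does no harm).
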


The following result establishes sufficient conditions for a set to be a \texttt{SAGBI} basis.

\begin{proposition}[{\cite[Proposition 4]{Nordbeck1998}}]\label{Nordbeck1998Proposition4}
Let $H \subset \Bbbk \{x_1, \dotsc, x_n\}$ be such that ${\rm lw}(h_i) \neq {\rm lw}(h_j)$ if $h_i \neq h_j$, $h_i, h_j \in H$. If either no word in ${\rm lw}(H)$ is a prefix {\rm (}proper left factor{\rm )} of some other word in ${\rm lw}(H)$, or no word in ${\rm lw}(H)$ is a suffix {\rm (}proper right factor{\rm )} of some other, then $H$ is a {\rm \texttt{SAGBI}} basis.
\end{proposition}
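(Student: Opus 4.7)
The plan is to apply the criterion of Proposition~\ref{Nordbeck1998Proposition3} and verify that every $T$-polynomial of a critical pair of $H$ reduces to zero. In fact I will establish the stronger claim that, under either hypothesis of the proposition, any critical pair $(m(H),m'(H))$ already satisfies $m(H)=m'(H)$ inside $\Bbbk\{x_1,\dots,x_n\}$, so that its $T$-polynomial is literally $0$, which reduces to zero by convention.

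To establish this, I would write $m(H)=h_{i_1}\dotsb h_{i_t}$ and $m'(H)=h_{j_1}\dotsb h_{j_s}$ with $h_{i_k},h_{j_l}\in H$. Multiplicativity of the leading word under an admissible order yields
$$
{\rm lw}(h_{i_1})\dotsb{\rm lw}(h_{i_t}) \;=\; {\rm lw}(m(H)) \;=\; {\rm lw}(m'(H)) \;=\; {\rm lw}(h_{j_1})\dotsb{\rm lw}(h_{j_s})
$$
as an equality in $W$. I then argue by induction on $t+s$ that the two factorizations of this common word into elements of ${\rm lw}(H)$ must coincide. In the prefix-free case, comparing leftmost letters of the common word, one of ${\rm lw}(h_{i_1})$, ${\rm lw}(h_{j_1})$ is a prefix of the other; since the hypothesis forbids a proper prefix between distinct words in ${\rm lw}(H)$, these two words must be equal in $W$, and the injectivity assumption on ${\rm lw}|_{H}$ then forces $h_{i_1}=h_{j_1}$. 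Cancelling this common left factor and invoking the inductive hypothesis yields $t=s$ and $h_{i_k}=h_{j_k}$ for every $k$. The suffix-free case is handled symmetrically by comparing rightmost letters and cancelling from the right.

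Consequently $m(H)=m'(H)$ in $\Bbbk\{x_1,\dots,x_n\}$, the scalar $k$ appearing in Definition~\ref{Nordbeck1998Definition3} is forced to be $1$, and $T(m(H),m'(H))=0$; Proposition~\ref{Nordbeck1998Proposition3} then delivers the conclusion. The main (mild) obstacle I anticipate is administrative: handling the edge case in which some ${\rm lw}(h_{i_1})$ equals the empty word (which, under the prefix-free hypothesis, can only occur if $|H|\leq 1$, a trivially true scenario), and invoking the injectivity of ${\rm lw}|_{H}$ and the prefix/suffix-free hypothesis at exactly the right step of the induction. Beyond this, the argument is a pure unique-factorization observation at the level of words in $W$ and requires no manipulation inside $\Bbbk\{x_1,\dots,x_n\}$.
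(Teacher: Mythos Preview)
The paper itself does not supply a proof of this proposition; it is merely recorded in the expository Section~\ref{Freealgebras} as a citation of \cite[Proposition~4]{Nordbeck1998}. Your argument is correct and is precisely the expected one: the prefix-free (resp.\ suffix-free) hypothesis makes ${\rm lw}(H)$ a code in $W$, so the equality ${\rm lw}(h_{i_1})\dotsb{\rm lw}(h_{i_t})={\rm lw}(h_{j_1})\dotsb{\rm lw}(h_{j_s})$ forces the two factor sequences to agree, and the injectivity of ${\rm lw}|_H$ then yields $m(H)=m'(H)$; hence every $T$-polynomial vanishes and Proposition~\ref{Nordbeck1998Proposition3} applies.
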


With the aim of reducing the number of critical pairs to be considered for the \texttt{SAGBI} basis test, Nordbeck formulated another version of Proposition \ref{Nordbeck1998Proposition3}. Briefly, the idea is that in the proof of the sufficiency of that result, it was only used that every $T(m(H), m'(H))$ either was zero, or could be written as a sum of monomials of height less than ${\rm lw}(m(H)) = {\rm lw}(m'(H))$. Nordbeck said that, sometimes without mentioning the corresponding $m(H)$ and $m'(H)$, that such a $T$-polynomial {\em admits a low representation}. Since this property of the $T(m(H), m'(H))$'s is clear if $H$ is a \texttt{SAGBI} basis (by reduction), then $H$ is a \texttt{SAGBI} basis if and only if every $T$-polynomial of $H$ admits a low representation \cite[Proposition 5]{Nordbeck1998}.

\begin{proposition}[{\cite[Proposition 6]{Nordbeck1998}}]\label{Nordbeck1998Proposition6}
Let $(m(H), m'(H))$ be a critical pair of $H$, and assume that there are factorizations $$
m(H) = m_1(H) m_2(H)\quad {\rm and} \quad m'(H) = m_1'(H) m_2'(H)$$ where $m_1(H)$, $m_2(H), m_1'(H), m_2'(H)$ are monomials in $\Bbbk \{H\}$ with ${\rm lw}(m_i(H)) = {\rm lw}(m_i'(H)),\ i = 1, 2$. If we have that $T(m_i(H), m_i'(H))$ admits a low representation for $i = 1, 2$, then also $T(m(H), m'(H))$ admits a low representation.
\end{proposition}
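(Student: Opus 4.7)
The plan is to substitute the given low representations of the two smaller $T$-polynomials into the factored form of $T(m(H),m'(H))$, expand, and verify that every surviving summand has height strictly below ${\rm lw}(m(H))$.

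First, I would pin down the scalar $k$ that enters $T(m(H),m'(H))=m(H)-k m'(H)$. Write $c_i,c_i'$ for the leading coefficients in $\Bbbk$ of $m_i(H)$ and $m_i'(H)$ viewed inside $\Bbbk\{x_1,\dotsc,x_n\}$. Because ${\rm lw}(m_i(H))={\rm lw}(m_i'(H))$, the scalar appearing in $T(m_i(H),m_i'(H))=m_i(H)-k_i m_i'(H)$ is forced to be $k_i=c_i/c_i'$. Admissibility of $\prec$ prevents any cancellation of leading words under multiplication in the free algebra, so ${\rm lw}(m(H))={\rm lw}(m_1(H)){\rm lw}(m_2(H))$ with leading coefficient $c_1 c_2$, and analogously ${\rm lw}(m'(H))$ has leading coefficient $c_1' c_2'$. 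Consequently $k=(c_1 c_2)/(c_1' c_2')=k_1 k_2$.

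Next, set $R_i:=T(m_i(H),m_i'(H))$, so that by hypothesis $R_i$ is a $\Bbbk$-linear combination of monomials $n(H)\in\Bbbk\{H\}$ with ${\rm lw}(n(H))\prec{\rm lw}(m_i(H))$, and $m_i(H)=k_i m_i'(H)+R_i$. Expanding the product,
\[
m(H)=\bigl(k_1 m_1'(H)+R_1\bigr)\bigl(k_2 m_2'(H)+R_2\bigr)=k_1 k_2\, m'(H)+k_1 m_1'(H) R_2+k_2 R_1 m_2'(H)+R_1 R_2,
\]
and subtracting $k m'(H)=k_1 k_2 m'(H)$ leaves
\[
T(m(H),m'(H))=k_1 m_1'(H) R_2+k_2 R_1 m_2'(H)+R_1 R_2.
\]

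Finally, I would check heights termwise. Each summand, once distributed, is a scalar multiple of a monomial of the form $m_1'(H) n_2(H)$, $n_1(H) m_2'(H)$, or $n_1(H) n_2(H)$ in $\Bbbk\{H\}$, where $n_i(H)$ is a monomial appearing in $R_i$. Admissibility gives, for instance, ${\rm lw}(m_1'(H) n_2(H))={\rm lw}(m_1'(H)){\rm lw}(n_2(H))\prec{\rm lw}(m_1'(H)){\rm lw}(m_2(H))={\rm lw}(m(H))$, and the other two cases are identical modulo notation. Hence the displayed sum is a low representation of $T(m(H),m'(H))$. The only delicate point is the computation $k=k_1 k_2$, which rests on the absence of cancellation of leading words under multiplication in the free algebra; after that, the proof is a clean expansion and a term-by-term comparison of leading words.
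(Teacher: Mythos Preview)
Your argument is correct. The paper does not actually supply a proof of this proposition; it is quoted verbatim from Nordbeck's 1998 paper and only the statement is reproduced, so there is no in-paper proof to compare against. Your approach---computing $k=k_1k_2$ from the multiplicativity of leading words and leading coefficients in the free algebra, then expanding $(k_1 m_1'(H)+R_1)(k_2 m_2'(H)+R_2)$ and bounding the height of each cross term via the admissibility of~$\prec$---is exactly the standard one and matches what Nordbeck does in the original source. The only point worth making explicit (you essentially have it) is that the inequality ${\rm lw}(u\cdot v)={\rm lw}(u)\,{\rm lw}(v)$ holds because admissibility is \emph{strict} on both sides, so no two distinct pairs of words can contribute the same product word at the top; this is what justifies both $k=k_1k_2$ and the termwise height estimates.
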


Of course, by induction we have that Proposition \ref{Nordbeck1998Proposition6} holds for every number of factors, i.e. that for any factorizations 
$$
m = m_1 \dotsb m_t \quad {\rm and} \quad m' = m_1' \dotsb m_t'
$$ 

with ${\rm lw}(m_i(H)) = {\rm lw} (m_i'(H))$ for all $i$, then the only critical pairs necessary for the \texttt{SAGBI} test are those which cannot be factored in the sense above. Notice that this is the case only for the critical pairs of the form $(h, h'),\ h, h'\in H$ with ${\rm lw}(h) = {\rm lw} (h')$, and pairs $(m, m')$, $m = h_{i_1}\dotsb h_{i_s},\ m' = h_{i_1}' \dotsb h_{i_t}'$, $s$ or (and) $t \succ 1$, where the leading words of the factors overlap. Nordbeck called a critical pair with leading words of this form an {\em overlapping pair}. With the aim of finding all such pairs we can proceed in the following way.

\begin{idea}[{\cite[p. 143]{Nordbeck1998}}]
\textquotedblleft An overlapping pair must begin with two elements of $H$ where one of the leading words is a prefix of the other. If these words are $u_1$ and $u_2 = u_1v$ we get the {\em overlap} $v$. We must then find all possibilities to continue on $v$. This could be with a leading word equal to $v$ (in which case we have obtained a critical pair), with a prefix of $v$, or finally with a word of which $v$ is a prefix. In the two latter cases we get new overlaps that might be continued. If we get an overlap we have obtained before (starting with $u_1, u_2$ or a previous pair) we need of course not continue with this; we have already examined how it can be be continued. As mentioned before, we get critical pairs whenever a leading word fits on an overlap.\textquotedblright

\textquotedblleft If $H$ is a finite set, then the process above is algorithmic. This rests on the fact that there only can be a finite number of different overlaps shorter than a given length. However, there may be an infinite number of overlapping pairs. This is the case if (and only if) the pair of leading words of some overlapping pair contains a segment beginning and ending with the same overlap. We will call such a segment a {\em loop}.\textquotedblright
\end{idea}

The following is one of the main problems in \texttt{SAGBI} theory.

\begin{problem}[{\cite[p. 144]{Nordbeck1998}}]\label{Nordbeck1998Problemp.144}
    For a finite set $H$, is the \texttt{SAGBI} basis test in general algorithmic?
\end{problem}

Below, we will some answers to this question (c.f. \cite[Proposition 6.1.3]{KreuzerRobbiano2005} and \cite[Theorem 6.6.29]{KreuzerRobbiano2005}).

\begin{proposition}[{\cite[Theorem 1]{Nordbeck1998}}]
    A subset $H\subseteq \Bbbk \{ x_1,\dotsc, x_n \}$ is a {\rm \texttt{SAGBI}} basis if and only if the $T$-polynomials of all the necessary critical pairs describes above reduce to zero over $H$.
\end{proposition}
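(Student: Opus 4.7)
The plan is to obtain this result as a strengthening of Proposition \ref{Nordbeck1998Proposition3}, replacing the full list of critical pairs by the much shorter list of necessary ones: pairs $(h,h')$ with $h,h'\in H$ and ${\rm lw}(h)={\rm lw}(h')$, together with the overlapping pairs described in the Idea preceding Problem \ref{Nordbeck1998Problemp.144}. The forward direction is immediate: necessary critical pairs are in particular critical pairs, so if $H$ is a \texttt{SAGBI} basis, Proposition \ref{Nordbeck1998Proposition3} already guarantees that each such $T$-polynomial reduces to zero over $H$.

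For the converse I would first observe that reduction to zero always yields a low representation. Indeed, if $T(m(H),m'(H))$ reduces to zero, then by (\ref{Nordbeck1998(1)})--(\ref{Nordbeck1998(2)}) it can be written as $\sum_i k_i m_i(H)$ with every ${\rm lw}(m_i(H))\preceq {\rm lw}(T(m(H),m'(H)))\prec {\rm lw}(m(H))={\rm lw}(m'(H))$. So under the hypothesis every necessary $T$-polynomial admits a low representation, and by \cite[Proposition 5]{Nordbeck1998} it suffices to show that \emph{every} $T$-polynomial of a critical pair of $H$ admits a low representation.

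This I would establish by induction on the total number of factors $s+t$ in the monomial expressions $m(H)=h_{i_1}\dotsb h_{i_s}$ and $m'(H)=h'_{j_1}\dotsb h'_{j_t}$. The base cases are precisely the necessary pairs of the first type ($s=t=1$), handled by the hypothesis. For the inductive step, either $(m(H),m'(H))$ is overlapping, and again we are done by hypothesis, or the description preceding Problem \ref{Nordbeck1998Problemp.144} provides an internal position at which both factorizations simultaneously complete a factor; this yields splittings $m(H)=m_1(H)m_2(H)$, $m'(H)=m_1'(H)m_2'(H)$ with ${\rm lw}(m_i(H))={\rm lw}(m_i'(H))$ for $i=1,2$. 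Each induced pair $(m_i(H),m_i'(H))$ has strictly smaller total factor count, so by induction $T(m_i(H),m_i'(H))$ admits a low representation, and Proposition \ref{Nordbeck1998Proposition6} transfers this to $T(m(H),m'(H))$.

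The main obstacle is to rigorously justify the clean-split dichotomy invoked in the inductive step. Concretely, one reads the common word ${\rm lw}(m(H))={\rm lw}(h_{i_1})\dotsb {\rm lw}(h_{i_s})={\rm lw}(h'_{j_1})\dotsb{\rm lw}(h'_{j_t})$ from left to right and tracks the two sequences of partial-product lengths; an overlap persists throughout if and only if these two length sequences never coincide at an internal position, which is precisely the combinatorial definition of an overlapping pair. Whenever the overlap resolves, the coinciding internal position supplies the factorization required by Proposition \ref{Nordbeck1998Proposition6}, and the induction closes. Combining this with the preceding reductions gives the biconditional.
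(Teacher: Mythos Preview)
The paper does not supply its own proof of this proposition; it is quoted from \cite{Nordbeck1998} as part of the preparatory survey in Section~\ref{Freealgebras}. Your argument is correct and is exactly the one that the surrounding exposition is set up to support: the forward direction is a specialization of Proposition~\ref{Nordbeck1998Proposition3}, and for the converse you correctly pass from ``reduces to zero'' to ``admits a low representation'' via (\ref{Nordbeck1998(1)})--(\ref{Nordbeck1998(2)}), invoke \cite[Proposition 5]{Nordbeck1998}, and then run the induction on the total number of factors using Proposition~\ref{Nordbeck1998Proposition6} for the splitting step. The clean-split dichotomy you isolate as the ``main obstacle'' is precisely the content of the paragraph following Proposition~\ref{Nordbeck1998Proposition6} and of the Idea preceding Problem~\ref{Nordbeck1998Problemp.144}: reading the common leading word with its two factorizations into words from ${\rm lw}(H)$, either the two sequences of internal break positions share a point (giving the factorization needed for Proposition~\ref{Nordbeck1998Proposition6}) or they do not, in which case the pair is by definition overlapping (or of type $(h,h')$ when $s=t=1$) and hence among the necessary pairs covered by hypothesis. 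Your combinatorial justification via partial-product lengths makes this explicit and is sound.
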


\subsection{\texttt{SAGBI} basis construction}

Nordbeck \cite{Nordbeck1998} presented the \texttt{Algorithm 1} to compute \texttt{SAGBI} bases, where we know that $H_{\infty}$ is a \texttt{SAGBI} basis for the subalgebra $S$ generated by $H$ \cite[Proposition 12]{Nordbeck1998}. As Nordbeck stated, \textquotedblleft In general the algorithm will not stop, and we have seen in Section \ref{Nordbeck1998Section3} that Step (2) may not be algorithms even for a finite set $H$. We conclude that the construction algorithm is mostly of theoretical value\textquotedblright\ \cite[p. 146]{Nordbeck1998}.

\begin{algorithm}\label{Algorithm1Nordbeck}
\caption{}
\SetKwInOut{Input}{INPUT} 
\SetKwInOut{Output}{OUTPUT} 
\Input{A subset $H$ of $\Bbbk \{ x_1, \dotsc, x_n \}$}
\Output{A \texttt{SAGBI} basis for the subalgebra $S$ generated by $H$}
\textbf{INITIALIZATION:} $H_0 = H$; 

\BlankLine

Use the methods in Section \ref{Nordbeck1998Section3} to find the set $M_i$ of all necessary critical pairs $(m, m')$ of $H_i$.

\BlankLine

$H_{i+1} = H_i\bigcup \{\overline{T(m(H), m'(H))} \mid (m(H), m'(H)) \in M_i, \overline{T(m(H), m'(H))} \neq 0\}$. Here $\overline{T(m(H), m'(H))}$ denotes a result of reduction over $H_i$.

\BlankLine

If $H_{i+1} \neq H_i$ then go to Step (2) $(i + 1 \mapsto i)$.

\BlankLine

$H_{\infty} = \bigcup H_i$
\end{algorithm}

\section{\texttt{SAGBI} bases of SPBW extensions}\label{SAGBIbasesSPBW}

SPBW extensions generalize Ore extensions of injective type introduced by Ore \cite{Ore1933}, PBW extensions defined by Bell and Goodearl \cite{BellGoodearl1988}, 3-dimensional skew polynomial algebras introduced by Bell and Smith \cite{BellSmith1990}, ambiskew polynomial rings in the sense of Jordan \cite{Jordan1990}, solvable polynomial rings by Kandri-Rody and Weispfenning \cite{KandriRodyWeispfenning1990}, skew bi-quadratic algebras recently introduced by Bavula \cite{Bavula2023}, and different families of quantum algebras. For more details about SPBW extensions and other noncommutative algebras having PBW bases, see \cite{BuesoTorrecillasVerschoren, GoodearlWarfield2004, GomezTorrecillas2014, Levandovskyy2005, Li2002, Reyes2014, ReyesSuarezMomento2017, ReyesSuarez20173dimentional, Seiler2010}. 

In this section we recall some  results about these objects which are important for the rest of the paper.
\begin{definition}[{\cite[Definition 1]{GallegoLezama2010}}]\label{def.skewpbwextensions}
Let $R$ and $A$ be rings. We say that $A$ is a \textit{SPBW extension} over $R$ if the following conditions hold:
\begin{enumerate}
\item[\rm (i)] $R$ is a subring of $A$ sharing the same identity element.

\item[\rm (ii)] There exist finitely many elements $x_1,\dots ,x_n\in A$ such that $A$ is a left free $R$-module, with basis the
set of standard monomials
\begin{center}
${\rm Mon}(A):= \{x^{\alpha}:=x_1^{\alpha_1}\cdots x_n^{\alpha_n}\mid \alpha=(\alpha_1,\dots
,\alpha_n)\in \mathbb{N}^n\}$.
\end{center}
Moreover, $x^0_1\cdots x^0_n := 1 \in {\rm Mon}(A)$.

\item[\rm (iii)] For each $1\leq i\leq n$ and any $r\in R\ \backslash\ \{0\}$, there exists an element $c_{i,r}\in R\ \backslash\ \{0\}$ such that
$x_ir-c_{i,r}x_i\in R.$

\item[\rm (iv)] For $1\leq i,j\leq n$ there exists $d_{i,j}\in R\ \backslash\ \{0\}$ such that
\begin{equation}\label{sigmadefinicion2}
x_jx_i - d_{i,j}x_ix_j\in R+Rx_1+\cdots +Rx_n,
\end{equation}

or equivalently, 
\begin{equation}\label{eq.rep variables x}
x_jx_i = d_{i,j}x_ix_j+r_{0_{j,i}} + r_{1_{j,i}}x_{1} + \cdots +
r_{n_{j,i}}x_{n},
\end{equation}
where $d_{i,j},r_{0_{j,i}},  r_{1_{j,i}}, \dots, r_{n_{j,i}}\in R$, for $1\leq i,j\leq n$.

Under these conditions, we write $A=\sigma(R)\langle x_1,\dots,x_n\rangle$.
\end{enumerate}
\end{definition}
\begin{remark}[{\cite[Remark 2]{GallegoLezama2010}}]\label{notesondefsigampbw} 
Let $A=\sigma(R)\langle x_1, \dots, x_n\rangle$ be a SPBW extension over $R$.
\begin{enumerate}
\item [\rm (i)] Since ${\rm Mon}(A)$ is a left $R$-basis of $A$, the elements $c_{i,r}$ and $d_{i,j}$ in Definition \ref{def.skewpbwextensions} are unique.

\item [\rm (ii)] If $r = 0$, then $c_{i,0}= 0$. In Definition \ref{def.skewpbwextensions} (iv), $d_{i,i}=1$. This follows from 
$$
x_i^2-d_{i,i}x_i^2=s_0+s_1x_1+\cdots + s_nx_n \quad {\rm with} \ s_j\in R,
$$

which implies that $1-d_{i,i}=0=s_j$ for $0\leq j\leq n$.

\item [\rm (iii)] Let $i<j$. By (\ref{sigmadefinicion2}) there exist elements
$d_{j,i}, d_{i,j}\in R$ such that 
$$
x_ix_j - d_{j,i}x_jx_i\in R+Rx_1+\cdots +Rx_n
$$ 

and
$$
x_jx_i-d_{i,j}x_ix_j\in R+Rx_1+\cdots+Rx_n,
$$

whence $1 = d_{j,i}d_{i,j}$, that is, for each
$1\leq i<j\leq n$, $d_{i,j}$ has a left inverse and $d_{j,i}$ has a right inverse. In general, the elements $d_{i,j}$ are not two-sided invertible. For instance, 
$$
x_1x_2 = d_{2,1}x_2x_1+p = d_{21}(d_{1,2}x_1x_2+q) + p,
$$

where $p,q\in R+Rx_1+\dotsb + Rx_n$, so that $1 = d_{2,1}d_{1,2}$ since $x_1x_2$ is a basic element of ${\rm Mon}(A)$. Now, 
$$
x_2x_1 = d_{1,2}x_1x_2+q = d_{1,2}(d_{2,1}x_2x_1+p) + q
$$

but we cannot conclude that $d_{1,2}d_{2,1}=1$
because $x_2x_1$ is not a basic element of ${\rm Mon}(A)$.

\item[\rm (iv)] Every element $f\in A\ \backslash\ \{0\}$ has a unique representation as a linear combination of monomials $$
f = \sum\limits_{i=1}^{t} r_iX_i \quad {\rm with} \ r_i\in R\ \backslash\ \{0\}, \ X_i\in {\rm Mon}(A) \ {\rm for} \ 1\leq i\leq t.
$$
\end{enumerate}
\end{remark}

Proposition \ref{sigmadefinition} shows the relationship between SPBW extensions and Ore extensions or skew polynomial rings.
\begin{proposition}[{\cite[Proposition 3]{GallegoLezama2010}}]\label{sigmadefinition}
Let $A=\sigma(R)\langle x_1, \dots, x_n\rangle$ be a SPBW extension over $R$. For each $1\leq i\leq n$, there exist an injective endomorphism $\sigma_i:R\rightarrow
R$ and a $\sigma_i$-derivation $\delta_i:R\rightarrow R$ such that $x_ir=\sigma_i(r)x_i+\delta_i(r), \ r \in R.$
\end{proposition}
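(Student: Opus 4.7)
The plan is to extract $\sigma_i$ and $\delta_i$ directly from condition (iii) of Definition \ref{def.skewpbwextensions} and then verify the required algebraic properties by repeatedly invoking the uniqueness of representations in the left free $R$-module basis ${\rm Mon}(A)$ (Remark \ref{notesondefsigampbw}(i),(iv)). Concretely, for each $1\leq i\leq n$ I set
\[
\sigma_i(r) := c_{i,r} \text{ for } r\neq 0, \qquad \sigma_i(0) := 0, \qquad \delta_i(r) := x_i r - \sigma_i(r) x_i,
\]
so that $\delta_i(r)\in R$ and the identity $x_i r=\sigma_i(r)x_i+\delta_i(r)$ holds by construction. Both maps are well defined because the coefficients $c_{i,r}$ are unique (Remark \ref{notesondefsigampbw}(i)).

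Next I would check additivity, multiplicativity, and the derivation rule by comparing two expansions of the same element of $A$ in the basis ${\rm Mon}(A)$. For additivity, expand $x_i(r+s)$ in two ways and use uniqueness of the coefficient of $x_i$ and of the constant term to conclude $\sigma_i(r+s)=\sigma_i(r)+\sigma_i(s)$ and $\delta_i(r+s)=\delta_i(r)+\delta_i(s)$. For the product, compute
\[
x_i(rs)=(x_i r)s=\sigma_i(r)x_i s+\delta_i(r)s=\sigma_i(r)\sigma_i(s)x_i+\sigma_i(r)\delta_i(s)+\delta_i(r)s,
\]
and compare with $x_i(rs)=\sigma_i(rs)x_i+\delta_i(rs)$; uniqueness yields $\sigma_i(rs)=\sigma_i(r)\sigma_i(s)$ and $\delta_i(rs)=\sigma_i(r)\delta_i(s)+\delta_i(r)s$, which is exactly the $\sigma_i$-derivation property. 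Applying the same idea to $x_i\cdot 1 = x_i$ gives $\sigma_i(1)=1$ and $\delta_i(1)=0$, so $\sigma_i$ is a ring endomorphism of $R$.

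The one clause that is not purely formal is injectivity of $\sigma_i$, and I would treat this as the main point. Here the hypothesis that $c_{i,r}\in R\setminus\{0\}$ whenever $r\in R\setminus\{0\}$ (again from Definition \ref{def.skewpbwextensions}(iii)) is essential: it says $\sigma_i(r)\neq 0$ for every $r\neq 0$, i.e. $\ker\sigma_i=0$. Combined with the additivity already established, this gives that $\sigma_i(r)=\sigma_i(s)$ forces $\sigma_i(r-s)=0$, hence $r=s$. The only subtlety to watch is that condition (iii) is stated only for nonzero $r$, so one has to handle the $r=0$ case separately when verifying additivity and the product rule; this is immediate from the chosen convention $\sigma_i(0)=0$, $\delta_i(0)=0$, which is itself consistent with uniqueness applied to the identity $x_i\cdot 0 = 0$.
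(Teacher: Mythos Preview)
The paper does not actually prove this proposition: it is quoted verbatim from \cite[Proposition 3]{GallegoLezama2010} and stated without proof. Your argument is correct and is precisely the standard proof one finds in the cited reference---define $\sigma_i(r):=c_{i,r}$ and $\delta_i(r):=x_ir-\sigma_i(r)x_i$, then use uniqueness of coefficients in the basis ${\rm Mon}(A)$ to verify additivity, multiplicativity, the $\sigma_i$-derivation rule, and finally read off injectivity from the requirement $c_{i,r}\neq 0$ for $r\neq 0$ in Definition~\ref{def.skewpbwextensions}(iii).
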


\begin{definition}[{\cite[Definitions 6 and 11]{GallegoLezama2010}}]\label{definitioncoefficients}
Let $A = \sigma(R)\langle x_1,\dotsc, x_n\rangle$ be a SPBW extension over $R$.
\begin{enumerate}
\item[(i)] For $\alpha=\left(\alpha_{1}, \dotsc, \alpha_{n}\right) \in \mathbb{N}^{n}, \sigma^{\alpha}:=\sigma_{1}^{\alpha_{1}} \circ \dotsb  \circ \sigma_{n}^{\alpha_{n}}$, where $\circ$ denotes the classical composition of functions, $|\alpha|:=\alpha_{1}+\cdots+\alpha_{n} .$ If $\beta=\left(\beta_{1}, \dotsc, \beta_{n}\right) \in \mathbb{N}^{n},$ then $\alpha+\beta:=\left(\alpha_{1}+\beta_{1}, \dotsc, \alpha_{n}+\beta_{n}\right)$.
\item[(ii)] For $X=x^{\alpha} \in \operatorname{Mon}(A),$ $\exp (X):=\alpha$ and $\deg(X):=|\alpha|$.

\item[(iii)] Since every element $f\in A$ can be written uniquely as $f = \sum\limits_{i=1}^{t} r_iX_i$ (Remark \ref{notesondefsigampbw} (iv)), let $\deg(f):=\max \left\{\deg\left(X_{i}\right)\right\}_{i=1}^{t}$.

\item[(iv)] Let $\preceq$ be a total order defined on $\operatorname{Mon}(A).$ If $x^{\alpha} \preceq x^{\beta}$ but $x^{\alpha} \neq x^{\beta}$, we will write $x^{\alpha} \prec x^{\beta}.$ $x^{\beta} \preceq x^{\alpha}$ means $x^{\alpha} \succeq x^{\beta}.$  Each element $f \in A\ \backslash\ \{0\}$ can be represented in a unique way as $f=c_1 x^{\alpha_1}+\cdots+c_t x^{\alpha_t}$, with $c_i \in R\ \backslash\ \{0\}, 1 \leq i \leq t$, and $x^{\alpha_1} \succ \cdots \succ x^{\alpha_t}$. We say that $x^{\alpha_1}$ is the \textit{leading monomial} of $f$ and we write $\operatorname{lm}(f):=x^{\alpha_1} ; c_1$ is the \textit{leading coefficient} of $f,\ {\rm lc}(f):=c_1$, and $c_1 x^{\alpha_1}$ is the \textit{leading term} of $f$ denoted by ${\rm lt}(f):=c_1 x^{\alpha_1}$. If $f=0$, we define $\operatorname{lm}(0):=0, \operatorname{lc}(0):=0, \operatorname{lt}(0):=0$.

We say that $\preceq$ is a \textit{monomial order} (also called {\em admissible order}) on $\operatorname{Mon}(A)$ if the following conditions hold:
\begin{enumerate}
\item[\rm (a)] For every $x^\alpha, x^\beta, x^\gamma, x^\lambda \in \operatorname{Mon}(A)$, the relation $x^\alpha \preceq x^\beta$ implies that $\operatorname{lm}(x^\gamma x^\alpha  x^\lambda) \preceq \operatorname{lm}(x^\gamma x^\beta x^\lambda)$.

\item[\rm (b)] $1 \preceq x^\alpha$ for every $x^\alpha \in \operatorname{Mon}(A)$.

\item[\rm (c)] $\preceq$ is degree compatible, i.e.  $|\alpha| \leq |\beta|$ implies $x^\beta \preceq x^\alpha.$
\end{enumerate}
\end{enumerate}
\end{definition}

The condition (iv)(c) of the previous definition is needed in the proof that every monomial order on ${\rm Mon}(A)$ is a well order. Thus, there are not infinite decreasing chains in ${\rm Mon}(A)$ \cite[Proposition 12]{GallegoLezama2010}. Nevertheless, this hypothesis is not really needed to get a well ordering if a more elaborated argument, based upon Dickson's Lemma, is developed (e.g. \cite[Theorem 4.6.2]{BeckerWeispfenning1993}). 

Let us see some classical examples of monomial orderings (for more details, see \cite[Chapter 2, Section 2]{CoxLittleOshea2015}).

\begin{example}\label{Monomialordersexamples}
\item [\rm (i)] (\texttt{Lexicographic Order}) Let $\alpha = (\alpha_1, \dotsc, \alpha_n)$ and $\beta = (\beta_1, \dotsc, \beta_n)$ be in $\mathbb{Z}_{\ge 0}^{n}$. We say $\alpha \succ \beta$ if the leftmost non-zero entry of the vector difference $\alpha - \beta \in \mathbb{Z}_{\ge 0}^{n}$ is positive. We will write $x^{\alpha} \succ_{\texttt{lex}} x^{\beta}$ if $\alpha \succ_{\texttt{lex}} \beta$.

The indeterminates $x_1, \dotsc, x_n$ are ordered in the usual way by the \texttt{lex} ordering
\[
(1, 0, \dotsc, 0) \succ_{\texttt{lex}} (0, 1, 0, \dotsc, 0) \succ_{\texttt{lex}} \dotsb \succ_{\texttt{lex}} (0, \dotsc, 0, 1),
\]
so $x_1 \succ_{\texttt{lex}} x_2 \succ_{\texttt{lex}} \dotsb \succ_{\texttt{lex}} x_n$.

If we work with polynomials in two or three indeterminates, we will write $x, y, z$ rather than $x_1, x_2, x_3$.

\item [\rm (ii)] (\texttt{Degree Lex Order}) Let $\alpha, \beta \in \mathbb{Z}_{n\ge 0}^n$. We say $\alpha \succ_{\texttt{deglex}} \beta$ if
\[
|\alpha| = \sum_{i = 1}^{n} \alpha_i > |\beta| = \sum_{i=1}^{n} \beta_i, \quad {\rm or}\quad  |\alpha| = |\beta|\ \ {\rm and}\ \ \alpha\succ_{\rm lex}\ \beta.
\]
We will write $x^{\alpha} \succ_{\texttt{deglex}} x^{\beta}$ if $\alpha \succ_{\texttt{deglex}} \beta$.

\item [\rm (iii)] (\texttt{Degree Reverse Lex Order}) Let $\alpha, \beta \in \mathbb{Z}_{n\ge 0}^n$. We say $\alpha \succ_{\texttt{degrevlex}} \beta$ if
\begin{align*}
 |\alpha| = &\ \sum_{i = 1}^{n} \alpha_i > |\beta| = \sum_{i=1}^{n} \beta_i, \quad {\rm or} \\
 |\alpha| = &\ |\beta|\ \ {\rm and\ the\ rightmost\ non}-{\rm zero\ entry\ of}\ \ \alpha - \beta \in \mathbb{Z}^n\ {\rm is\ negative}.
\end{align*}

We will write $x^{\alpha} \succ_{\texttt{degrevlex}} x^{\beta}$ if $\alpha \succ_{\texttt{degrevlex}} \beta$.
\end{example}

We start by presenting some terminology used in this section.

\subsection{Basic definitions}\label{SPBWSAGBIBasicDefinitions}

\begin{definition}
From now on, we consider $A=\sigma(R)\left\langle x_{1}, \dotsc, x_{n}\right\rangle$ as a SPBW extension over a $\Bbbk$-algebra $R$ (that is, $\sigma_{i}(k)=k$ and $\delta_{i}(k)=0$, for every $k \in \Bbbk$, and $1 \leq i \leq n$, as in Proposition \ref{sigmadefinition}), and $F$ a finite set of non-zero elements of $A$. Let $\preceq$ be a monomial ordering on $A$ in the sense of Definition \ref{definitioncoefficients}(iv).
\begin{enumerate}
\item[\rm (i)] The notation $\Bbbk\langle F\rangle_{A}$ means {\em the subalgebra of} $A$ {\em generated by} $F$. The elements of $\Bbbk\langle F\rangle_{A}$ are precisely the polynomials in the set of formal intederminates $F$, viewed as elements of $A$. The elements of $\Bbbk \subset \Bbbk\langle F\rangle_{A}$ correspond to the constant polynomials. 

\item[\rm (ii)] {\em By an} $F$-{\em monomial} {\em we mean a finite product of elements from} $F$ that will usually be written as $m(F)$ (the \textquotedblleft empty\textquotedblright\ monomial $1$ is considered). When we speak of the leading monomial, leading coefficient and leading term of an element in $\Bbbk\langle F\rangle_{A}$, we will always mean the leading monomial, leading coefficient and leading term, respectively, of the element viewed as an element of $A$, relative to the monomial ordering $\preceq$ in $A$.

\item[\rm (iii)] $\operatorname{lm}(F):=\left\{\operatorname{lm}(f) \mid f \in F\right\}$. Since orders are preserved after multiplication, if $m(F) = f_{i_ 1} f_{i_ 2} \cdots f_{i_ t}$, with $f_{i_j}\in F$, then $$\operatorname{lt}(m(F))= \operatorname{lt}\left(\operatorname{lt}\left(f_{i_ 1}\right) \operatorname{lt}\left(f_{i_ 2}\right) \cdots \operatorname{lt}\left(f_{i_ t}\right)\right)$$ and $$ \operatorname{lm}(m(F))= \operatorname{lm}\left(\operatorname{lm}\left(f_{i_ 1}\right) \operatorname{lm}\left(f_{i_ 2}\right) \cdots \operatorname{lm}\left(f_{i_ t}\right)\right).$$
\end{enumerate}
\end{definition}

\begin{example}\label{mondiffusion}
Consider the diffusion algebra  $\sigma(\mathbb{Q}[x_1,x_2, x_3])\langle D_1, D_2, D_3\rangle$ (see \cite[Section 2.4]{Fajardoetal2020} for more details), and let 
$$
F :=\left\{f_1 :=x_1x_2D_1D_2+x_3D_1D_3, f_2:=D_2^2D_3^2\right\}
$$ 

be a subset of the algebra with the monomial ordering \texttt{deglex}, and $D_1\succ D_2\succ D_3$. An example of an $F$-monomial $m(F) \in \Bbbk\langle F\rangle_A$ is given by
	\begin{align*}
	m(F) = &\ f_1f_2 = \left(x_1x_2D_1D_2+x_3D_1D_3\right)D_2^2D_3^2\\ = &\ x_1x_2D_1D_2^3D_3^2+x_3D_1\left(D_3D_2\right) \left(D_2D_3^2\right)\\ = &\ x_1x_2D_1D_2^3D_3^2+x_3D_1 \left(D_2D_3+x_3D_2-x_2D_3\right) \left(D_2D_3^2\right)\\ = &\ x_1x_2D_1D_2^3D_3^2+x_3D_1D_2\left(D_3D_2\right)D_3^2+x_3^2D_1D_2^2D_3^2-x_2x_3D_1\left(D_3D_2\right)D_3^2\\ = &\ 	x_1x_2D_1D_2^3D_3^2+x_3D_1D_2\left(D_2D_3+x_3D_2-x_2D_3\right) D_3^2+x_3^2D_1D_2^2D_3^2\\&-x_2x_3D_1\left(D_2D_3+x_3D_2-x_2D_3\right)D_3^2\\ = &\ 	x_1x_2D_1D_2^3D_3^2+x_2^2x_3D_1D_3^3-x_2x_3^2D_1D_2D_3^2\\ & -2x_2x_3D_1D_2D_3^3+2x_3^2D_1D_2^2D_3^2+x_3D_1D_2^2D_3^3.
	\end{align*}
\end{example}

\begin{example}

Following Havli\v{c}ek et al. \cite[p. 79]{HavlicekKlimykPosta2000}, the $ \mathbb{C}$-algebra $U_q'(\mathfrak{so}_3)$ over the field of complex numbers $\mathbb{C}$ is generated by the indeterminates $I_1, I_2$, and $I_3$, subject to the relations given by
\begin{equation*}
  I_2I_1 - qI_1I_2 = -q^{\frac{1}{2}}I_3,\quad  I_3I_1 - q^{-1}I_1I_3 = q^{-\frac{1}{2}}I_2,\quad     I_3I_2 - qI_2I_3 = -q^{\frac{1}{2}}I_1,
\end{equation*}

where $q\in \mathbb{C}^{*}$. By using \cite[Theorem 1.14]{ReyesSuarezMomento2017}, it can be shown that $U_q'(\mathfrak{so}_3) \cong \sigma(\mathbb{C}) \langle I_1, I_2, I_3\rangle$.

Consider the subset 
$$
F =\left\{f_1:=I_1I_2+I_3, f_2:=I_1I_3+I_2\right\}
$$ 
of $U'_q(\mathfrak{so}_3)$ with the monomial ordering \texttt{deglex} and $I_1\succ I_2\succ I_3$. Then: 
\begin{align*}m(F) = &\ f_1f_2 \\
	= &\ \left(I_1I_2+I_3\right) \left(I_1I_3+I_2\right)\\
	= &\ I_1\left(I_2I_1\right) I_3+I_1I_2^2 + \left(I_3I_1\right) I_3 + I_3I_2\\ = & \ I_1\left(qI_1I_2-q^{1/2}I_3\right) I_3+I_1I_2^2+I_3\left(q^{-1}I_1I_3+q^{-1/2}I_2\right) + qI_2I_3-q^{-1/2}I_1\\= & \ qI_1^2I_2I_3-q^{-1/2}I_1I_3^2+I_1I_2^2+q^{-1} \left(q^{-1}I_1I_3+q^{-1/2}I_2\right) I_3\\ & +q^{-1/2}\left(qI_2I_3-q^{1/2}I_1\right) + qI_2I_3-q^{-1/2}I_1\\ = &\  qI_1^2I_2I_3+I_1I_2^2 + \left(q^{-2}-q^{-1/2}\right) I_1I_3^2 + \left(q^{-3/2}+q^{1/2}\right) I_2I_3\\ & + qI_1I_3 - \left(1+q^{-1/2}\right) I_1	
\end{align*}
 
Notice that 
\begin{align*}
	\operatorname{lm}(f_1)\operatorname{lm}(f_2) = &\ (I_1I_2)(I_1I_3)=I_1(qI_1I_2-q^{1/2}I_3)I_3=qI_1^2I_2I_3-q^{1/2}I_1I_3^2, \\
	\operatorname{lm}\left(\operatorname{lm}\left(f_{ 1}\right) \operatorname{lm}\left(f_{2}\right)\right) = &\ qI_1^2I_2I_3=\operatorname{lm}(m(F)).
\end{align*}
\end{example}

\begin{example}
Let $U(\mathfrak{osp}(1,2))$ be the universal enveloping algebra of the Lie superalgebra $\mathfrak{osp}(1,2)$. This is generated by the indeterminates $x,y,z$ subject to the relations 
$$
yz - zy = z,\ zx + xz = y \quad  {\rm and} \quad xy - yx = x.
$$

From \cite[p. 1215]{LezamaReyes2014} we know that $U(\mathfrak{osp}(1,2)) \cong \sigma(\Bbbk)\langle x,y,z \rangle$. Consider the subset 
$$
F =\{f_1=x^2y, f_2=xy+z\}
$$ 

of $U(\mathfrak{osp}(1,2))$ with the monomial ordering \texttt{deglex} and $x\succ y\succ z.$ An example of an $F$-monomial $m(F) \in \Bbbk\langle F\rangle_A$ is given by
\begin{align*}
m(F) = &\ f_2f_1 = \left(xy+z\right)x^2y\\ = &\ x(yx)xy+(zx)xy\\ = &\ x(xy-x)xy+(-xz+y)xy\\ = &\ x^2(yx)y-x^3y-x(zx)y+(yx)y\\ = &\ x^2(xy-x)y-x^3y-x(-xz+y)y+(xy-x)y\\ = &\ x^3y^2-2x^3y+x^2(zy)-xy\\ = &\ x^3y^2-2x^3y+x^2(yz-z)-xy\\ = &\ x^3y^2-2x^3y+x^2yz-x^2z-xy.
\end{align*}
	
Note that 
\begin{align*}
   \operatorname{lm}(f_1)\operatorname{lm}(f_2)= &\ (xy)(x^2y)=x(xy-x)xy = x^2(yx)y-x^3y\\
   = &\ x^2(xy-x)-x^3y=x^3y^2-2x^3y,  
\end{align*}

and 
$$\operatorname{lm}(m(F))=\operatorname{lm}\left(\operatorname{lm}\left(f_{ 1}\right) \operatorname{lm}\left(f_{2}\right)\right)=x^3y^2. $$
\end{example}

\begin{example}
In Example \ref{mondiffusion}, 
$$
\operatorname{lm}(f_1)\operatorname{lm}(f_2)=(D_1D_2)(D_2^2D_3^2)=D_1D_2^3D_3^2
$$

and $\operatorname{lm}(m(F))$  is equal to $\operatorname{lm}\left(\operatorname{lm}\left(f_{ 1}\right) \operatorname{lm}\left(f_{2}\right)\right)= D_1D_2^3D_3^2$.
\end{example}

\subsection{Reduction and Algorithm for the \texttt{SAGBI} normal form}

We present the notion of {\em reduction} which is necessary in the characterization of \texttt{SAGBI} bases (c.f. \cite[Definition 6.6.16]{KreuzerRobbiano2005}, \ref{Nordbeck1998reduction} and  \cite[Definition 3]{Khanetal2019}).

\begin{reduction}\label{normalform}
Let $A = \sigma(R)\langle x_1,\dotsc, x_n\rangle$ be a SPBW extension over a $\Bbbk$-algebra $R$, $F$ a finite set of non-zero elements of $A$, and $s, s_0$ elements of $A$. We say that $s_0$ is a \textit{one-step $s$-reduction of} $s$ with respect to $F$ if there exist an $F$-monomial $m(F)$ and $k \in \Bbbk$ such that the following two conditions hold:
\begin{enumerate}
 \item [\rm (i)] $k\operatorname{lt}(m(F))=\operatorname{lt}(s)$, and 
 \item [\rm (ii)] $s_0=s-km(F).$
\end{enumerate}
\end{reduction}

If the first condition of Reduction \ref{normalform} fails, then the $s$-reduction of $s$ with respect to $F$ is $s.$ When we apply the one-step $s$-reduction process iteratively, we obtain a special form of $s$ with respect to $F$ (which cannot be $s$-reduced further with respect to $F$), called \texttt{SAGBI} normal form (c.f. \cite[Proposition 3.2]{LezamaMarin2009}). In this case, we write $\mathrm{s}_{0}:=\texttt{SNF}(\mathrm{s} | \mathrm{F})$ (c.f. \cite[Proposition 3.2]{LezamaMarin2009} and Reduction \cite[Definition 3]{Khanetal2019}). \texttt{Algorithm} \ref{Algorithm1} allows us to compute the \texttt{SNF}. 

\begin{algorithm}\label{Algorithm1}
\caption{}
\SetKwInOut{Input}{INPUT} 
\SetKwInOut{Output}{OUTPUT} 
\Input{A fixed monomial ordering $\succeq$ on $\operatorname{Mon}(A)$, $F \subseteq A$ and $s \in A.$}
\Output{$h \in A$, the \texttt{SAGBI} normal form}
\textbf{INITIALIZATION:} $s_{0}:=s$;
$F_{s_{0}}:=\left\{k m(F) \mid k \in \Bbbk \text { and } k\operatorname{lt}( m(F))=\operatorname{lt}\left(s_{0}\right)\right\}$;
\BlankLine

\While{$s_{0} \neq 0$ {\rm and} $F_{s_{0}} \neq \emptyset$}{
   choose $km(F) \in F_{s_0}$;
$s_0 := s_0 -km(F)$;
$F_{s_0} := \{km(F)\mid k \in \Bbbk$ and $k\operatorname{lt}(m(F)) = \operatorname{lt}(s_0)\}$\;
}
\textbf{return} $s_0$;
\end{algorithm}

Notice that for different choices of $km(F)$, the output of \texttt{SNF} may also be different. Examples \ref{examplered}, \ref{examplered2} and \ref{examplered3} illustrate this situation.

All algebras appearing in the following examples are SPBW extensions. Details can be found in Su\'arez \cite[Chapter 1]{SuarezPhDThesis2023}.

\begin{example}\label{examplered}
The {\em Jordan plane} $\mathcal{J}$ introduced by Jordan \cite{Jordan2001} is the free $\Bbbk$-algebra generated by the indeterminates $x,y$ subject to the relation $yx = xy + x^2$. Let $\Bbbk\langle F\rangle_{\mathcal{J}}$ be the subalgebra generated by the set $F:=\{q_1, q_2, q_3\}$, where 
$$
q_1:= x^2, q_2:= y, q_3:= xy+y \quad {\rm and} \quad g = x^3y+y\in \mathcal{J}.
$$

For the computation of $\texttt{SNF}(g | F)$, we use \texttt{Algorithm} \ref{Algorithm1}. 

Let $g=x^3y+y:=s_0$. If $F_{s_0}=\{q_1q_3,q_3q_1\}$, then we have two possibilities to choose $km(F) \in F_{s_0}$:
\begin{enumerate}
\item [\rm (i)] If $km(F):= q_1q_3$, then $s_0:=-x^2y+y$ and $F_{s_0}:=\{-q_1q_2,-q_2q_1\}$. Now, we take $km(F)=-q_2q_1$, which implies that $s_0:=y\in F,$ whence $\texttt{SNF}(g | F)=0$.
\item [\rm (ii)] If $km(F):= q_3q_1$, then $s_0:=-2x^4-x^2y-2x^3+y$ and $F_{s_0}:=\{-2(q_1)^2\}$, and so $km(F)=-2(q_1)^2$, whence $s_0:=-x^2-2x^3+y$, which implies that $F_{s_0}:=\{-q_1q_2,-q_2q_1\}.$ In this way, $km(F)=-q_1q_2,$ and we obtain $s_0:=-2x^3+y$ and $F_{s_0}=\emptyset.$ Thus, $\texttt{SNF}(g | F)=-2x^3+y.$
\end{enumerate}
\end{example}

\begin{example}
Zhedanov \cite[Section 1]{Zhedanov1991} introduced the {\em Askey-Wilson algebra} $AW(3)$ as the $\mathbb{R}$-algebra (as usual, $\mathbb{R}$ denotes the field of real numbers) generated by three operators $K_0, K_1$, and $K_2$, that satisfy the commutation relations 
\begin{align*}
	[K_0, K_1]_{\omega} = &\ K_2, \\
	[K_2, K_0]_{\omega} = &\ BK_0 + C_1K_1 + D_1, \quad {\rm and}\\
	[K_1, K_2]_{\omega} = &\ BK_1 + C_0K_0 + D_0,
\end{align*}

where $B, C_0, C_1, D_0$, and $D_1$ are elements of $\mathbb{R}$ that represent the structure constants of the algebra, and the $q$-commutator $[ - , -]_{\omega}$ is given by $[\square, \triangle]_{\omega}:= e^{\omega}\square \triangle - e^{- \omega}\triangle \square$, where $\omega\in \mathbb{R}$. Notice that in the limit $\omega \to 0$, the algebra AW(3) becomes an ordinary Lie algebra with three generators ($D_0$ and $D_1$ are included among the structure constants of the algebra in order to take into account algebras of Heisenberg-Weyl type). The relations above defining the algebra can be written as 
\begin{align*}
    e^{\omega}K_0K_1 - e^{-\omega}K_1K_0 = &\ K_2,\\
    e^{\omega} K_2K_0 - e^{-\omega}K_0 K_2 = &\ BK_0 + C_1K_1 + D_1,\\
    e^{\omega}K_1K_2 - e^{-\omega}K_2K_1 = &\ BK_1 + C_0K_0 + D_0.
\end{align*}

Consider $AW(3)$  with the monomial ordering \texttt{deglex} and $K_0\succ K_1\succ K_2.$ Let $\Bbbk\langle F\rangle_{AW(3)}$ be the subalgebra generated by the set $F:=\{q_1, q_2, q_3\}$, where 
$$
q_1:=K_0 K_1+K_2,\ q_2:= K_0,\ q_3:= K_0 K_2, \quad {\rm and} \quad g =K_0^2 K_1 K_2+K_2\in AW(3).
$$

Let us find $\texttt{SNF}(g | F)$ by using \texttt{Algorithm} \ref{Algorithm1}. 

Let $s_0 := g = K_0^2 K_1 K_2+K_2$. If $F_{s_0}=\{e^{-2\omega}q_1q_3,q_3q_1\}$, then we have two options to choose $km(F) \in F_{s_0}$:
\begin{enumerate}
\item [\rm (i)] If
\begin{align*}
km(F):= &\ e^{-2\omega}q_1q_3 \\
= &\ e^{-2\omega}(e^{2 \omega} K_0^2 K_1 K_2+\left(e^{-2 \omega} - e^\omega\right) K_0 K_2^2+e^{-\omega} B K_0 K_2+e^{-\omega} C_1 K_1 K_2\\
 &+e^{-\omega} D_1 K_2),
\end{align*}

then 
\[
s_0:=\left(e^{-\omega}-e^{-4 \omega}\right) K_0 K_2^2-e^{-3 \omega} B K_0 K_2-e^{-3 \omega} C_1 K_1 K_2+\left(1-e^{-3 w} D_1\right) K_2, 
\]

and $F_{s_0}=\emptyset$. Thus, 
\[
\texttt{SNF}(g | F)=\left(e^{-\omega}-e^{-4 \omega}\right) K_0 K_2^2-e^{-3 \omega} B K_0 K_2-e^{-3 \omega} C_1 K_1 K_2+\left(1-e^{-3 w} D_1\right) K_2.
\]

\item [\rm (ii)] If
\begin{align*}
km(F):= &\ q_3q_1 \\
= &\ K_0^2 K_1 K_2-e^{-\omega} C_0 K_0^3+e^{-\omega} C_1 K_0 K_1^2+K_0 K_2^2-e^{-\omega} D_0 K_0^2\\
 &+e^{-\omega} D_1 K_0 K_1,
\end{align*}

then
\[
s_0:=e^{-\omega} C_0 K_0^3-e^{-\omega} C_1 K_0 K_1^2-K_0 K_2^2+e^{-\omega} D_0 K_0^2-e^{-\omega} D_1 K_0 K_1+K_2,
\]

and $F_{s_0}:=\{e^{-\omega}C_0q_2^3\}$. In this way, 
$$
km(F)=e^{-\omega}C_0q_2^3, 
$$ 

whence
\[
s_0:=-e^{-\omega} C_1 K_0 K_1^2-K_0 K_2^2+e^{-\omega} D_0 K_0^2-e^{-\omega} D_1 K_0 K_1+K_2,
\]

which implies that $F_{s_0} = \emptyset$. We conclude that
\[
\texttt{SNF}(g | F)=-e^{-\omega} C_1 K_0 K_1^2-K_0 K_2^2+e^{-\omega} D_0 K_0^2-e^{-\omega} D_1 K_0 K_1+K_2.
\] 
\end{enumerate}
\end{example}

\begin{example}\label{examplered3}
From \cite[p. 41]{GoodearlWarfield2004}, for $q$ an element of $\Bbbk$ with $q\neq \pm 1$, the {\em quantized enveloping algebra} of $\mathfrak{sl}_2(\Bbbk)$ corresponding to the choice of $q$ is the $\Bbbk$-algebra $U_q(\mathfrak{sl}_2(\Bbbk))$ presented by the generators $E, F, K, K^{-1}$ and the relations 
\begin{align*}
	KK^{-1} = &\ K^{-1}K = 1,\\ 
	EF - FE = &\ \frac{K-K^{-1}}{q-q^{-1}}, \\ KE = &\ q^{2}EK, \ {\rm and}\\ KF = &\ q^{-2}FK.
\end{align*}

Consider $U_q(\mathfrak{sl}_2(\Bbbk))$ with the monomial ordering \texttt{deglex}, and $E\succ F\succ K\succ K^{-1}.$ Let $\Bbbk\langle \mathcal{F}\rangle_{U_q(\mathfrak{sl}_2(\Bbbk))}$ be the subalgebra generated by the set $\mathcal{F}:=\{q_1, q_2, q_3\}$, with 
$$
q_1:=E K-F K^{-1},\ q_2:= EF,\ q_3:= F K^{-1} \ {\rm and} \ g = E^2 F K+E F K^2\in U_q(\mathfrak{sl}_2(\Bbbk)).
$$

With the aim of finding $\texttt{SNF}(g | \mathcal{F})$, let $g=K_0^2 K_1 K_2+K_2:=s_0$. If $F_{s_0}=\{q_1q_2,q_2q_1\}$, then we obtain that:
\begin{enumerate}
\item [\rm (i)] If 
$$
km(\mathcal{F}) := q_1q_2=E^2 F K-E F^2 K^{-1}+\frac{F-F K^{-2}}{q-q^{-1}}, 
$$

then 
$$
s_0:=E F^2 K^{-1}+E F K^2-\frac{F-F K^{-2}}{q-q^{-1}}
$$ 

and $F_{s_0}=\{q_2q_3,q_3q_2\}.$ 

Now, if 
$$
km(\mathcal{F})=q_2q_3=E F^2 K^{-1},
$$

we get that 
$$
s_0:=E F K^2-\frac{F-F K^{-2}}{q-q^{-1}},
$$

which implies $F_{s_0}=\emptyset.$ Thus, 
$$
\texttt{SNF}(g |\mathcal{F} )= E F K^2-\frac{F-F K^{-2}}{q-q^{-1}}.
$$ 

On the other hand, if $$
km(\mathcal{F})=q_3q_2=E F^2 K^{-1}-\frac{F-F K^{-2}}{q-q^{-1}}, 
$$

it follows that $s_0:=E F K^2$, and so $F_{s_0}=\emptyset$. Thus, 
$$
\texttt{SNF}(g |\mathcal{F} )= E F K^2.
$$

\item [\rm (ii)] Let 
$$
km(\mathcal{F}):=q_2q_1=E^2 F K-E F^2 K^{-1}-\frac{EK^2-E}{q-q^{-1}}.
$$

We have that 
$$
s_0:=E F^2 K^{-1}+E F K^2+\frac{EK^2-E}{q-q^{-1}}
$$ and $F_{s_0}=\{q_2q_3,q_3q_2\}$. If 
$$
km(\mathcal{F})=q_2q_3=E F^2 K^{-1}, 
$$ 

we obtain 
$$
s_0:=E F K^2+\frac{EK^2-E}{q-q^{-1}},
$$

whence $F_{s_0}=\emptyset$. This implies that 
$$
\texttt{SNF}(g |\mathcal{F} )=E F K^2-\frac{EK^2-E}{q-q^{-1}}.
$$ 

Now, considering $$
km(\mathcal{F})=q_3q_2=E F^2 K^{-1}-\frac{F-F K^{-2}}{q-q^{-1}}, $$ 

it follows that 
$$
s_0:=E F K^2+\frac{EK^2-E}{q-q^{-1}}+\frac{F-F K^{-2}}{q-q^{-1}}, 
$$

and so $F_{s_0}=\emptyset$. Hence, 
$$\texttt{SNF}(g |\mathcal{F} )= E F K^2+\frac{EK^2-E}{q-q^{-1}}+\frac{F-F K^{-2}}{q-q^{-1}}.
$$
\end{enumerate}
\end{example}

If $S$ is a subalgebra of a SPBW extension $A$ and $F$ is a subset of $S$, then our interest lies in the case when \texttt{SAGBI} normal form is zero. If there is at least one choice of $F$-monomials such that $s_{0}=0,$ then we say $s$ \textit{reduces weakly over} $F,$ and \textit{reduces strongly} if all possible choices give $s_{0}=0$. This is the same as the case of $G$-algebras.

\begin{example}
The reduction process done in Example \ref{examplered} shows us that $g=x^3y+y\in \mathcal{J}$ reduces weakly over $F.$
\end{example}

\begin{example}\label{examplered2}
Consider the 3-dimensional skew polynomial algebra $A$ defined by Bell and Smith \cite{BellSmith1990} (see also \cite{ReyesSuarez20173dimentional}) which is generated over $\Bbbk$ by the indeterminates $x,y,z$ restricted to relations
$$
yz- zy=z,\ zx+xz=0 \quad {\rm and} \quad xy-yx=x. 
$$ 

Let $\Bbbk\langle F\rangle_{A}$ be the subalgebra generated by the set $F:=\{q_1, q_2, q_3, q_4\}$, where 
$$
q_1:= x^2, \ q_2:= z, \ q_3=xy+z, \ q_4=x^3
$$ 

with the monomial ordering \texttt{deglex} given by $x\succ y\succ z$. Consider the element $s=x^3y+z\in A$. 
Let $s=x^3y+z:=s_0$. If $F_{s_0}=\{q_1q_3,q_3q_1\}$, then we have two options to choose $km(F) \in F_{s_0}$:
\begin{enumerate}
\item [\rm (i)] If $km(F):= q_1q_3$, then $s_0:=-x^2z+z$ and $F_{s_0}:=\{-q_1q_2=-q_2q_1\}.$ Now, we take $km(F)=-q_2q_1$, whence $s_0:=z\in F,$ whence $\texttt{SNF}(s | F)=0$.
\item [\rm (ii)] If $km(F):= q_3q_1$, then $s_0:=2x^3+x^2z+z$ and $F_{s_0}=\{2q_4\}$. By taking $km(F)=2q_4$, we get $s_0:=x^2z+z$ and $F_{s_0}:=\{q_1q_2=q_2q_1\}.$ If $km(F)=q_2q_1$, then $s_0:=z\in F$ and $\texttt{SNF}(s | F)=0.$
\end{enumerate}

These facts show that $s$ reduces strongly over $F$.
\end{example}

The next definition is completely analogous to Definition \ref{Nordbeck1998Definition2} (see also  \cite[Definition 6.6.2]{KreuzerRobbiano2005} and \cite[Definition 4]{Khanetal2019}).

\begin{definition}\label{sagbidefinition}
Let $S$ be a subalgebra of a SPBW extension $A$. A subset $F \subseteq S$ is called a {\rm \texttt{SAGBI}} {\em basis} for $S$ if for all non-zero element $s\in S$, there exists an $F$-monomial $m(F)$ in $\Bbbk\langle F\rangle_{A}$ such that $\operatorname{lm(s)}=\operatorname{lm}(m(F)).$
\end{definition}

As occurs in the setting of free algebras, since monomial orderings are compatible with the multiplication, an equivalent formulation of the definition is that $F$ is a \texttt{SAGBI} basis if the leading term of every non-zero element in $S$ can be written as a product of leading terms of elements in $F$.

\begin{example} Consider $A$ as in the Example \ref{examplered2} and let $\Bbbk\langle F\rangle_{A}$ be the subalgebra generated by the set $$
F=\left\{q_1=zy, q_2=-zy+x, q_3=-y+x\right\}.
$$ 

If we use \texttt{deglex} with $z\succ y\succ x$, then we have that $\operatorname{lm}(F)=\{zy,y\}$. Nevertheless, note that 
$$
f = q_1q_3-q_3q_2=2zyx-z^2+y^2+yx-x^2-z \in S, 
$$

and $\operatorname{lm} (f)=zyx$ cannot be written as a product of terms in $\operatorname{lm}(F)$. This means that $F$ is not a \texttt{SAGBI} basis of $\Bbbk\langle F\rangle_{A}$.
\end{example}

The following result establishes that when $F$ is a \texttt{SAGBI} basis of $S$, then $s \in \Bbbk\langle F\rangle_{A}$ reduces strongly to $s_{0}=0$. This is the corresponding version of Proposition \ref{Nordbeck1998Proposition1} (see also \cite[Theorem 6.6.25 (C1)]{KreuzerRobbiano2005} and \cite[Proposition 2]{Khanetal2019}).

\begin{proposition}\label{prop}
Let $S$ be subalgebra of a SPBW extension $A$ and $F \subseteq S$. If $F$ is a {\rm \texttt{SAGBI}} basis of $S$, then the following assertions hold:
\begin{enumerate}
\item[\rm{(1)}] \label{1} For each  $s\in A$, we have that $s \in S$ if and only {\rm \texttt{SNF}}$(s | F) = 0$.

\item[\rm{(2)}]  $F$ generates the subalgebra $S$, i.e. $S=\Bbbk\langle F\rangle_{A}$.
\end{enumerate}
\end{proposition}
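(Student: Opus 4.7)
My plan is to prove the two assertions in the natural order, deriving (2) as a direct consequence of (1).

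For assertion (1), I would treat the two implications separately. The easy direction is \emph{if}: suppose $\texttt{SNF}(s|F)=0$. By construction of Reduction \ref{normalform} and \texttt{Algorithm} \ref{Algorithm1}, each reduction step subtracts a scalar multiple of an $F$-monomial $k_i m_i(F)\in \Bbbk\langle F\rangle_A$, so after $t$ steps we obtain an identity of the form
\[
s = \sum_{i=0}^{t-1} k_i m_i(F) + \texttt{SNF}(s|F).
\]
If $\texttt{SNF}(s|F)=0$, then $s\in \Bbbk\langle F\rangle_A\subseteq S$, which is what we need.

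For the \emph{only if} direction, I would argue by induction (or well-founded descent) on the leading monomial. Assume $s\in S$ is non-zero. Because $F$ is a \texttt{SAGBI} basis of $S$, Definition \ref{sagbidefinition} yields an $F$-monomial $m(F)$ with $\operatorname{lm}(s)=\operatorname{lm}(m(F))$; then there is a unique $k\in \Bbbk$ with $k\operatorname{lt}(m(F))=\operatorname{lt}(s)$, so the set $F_s$ in \texttt{Algorithm} \ref{Algorithm1} is non-empty and the algorithm executes at least one step, producing $s_0=s-km(F)$. The crucial observation is that $s_0\in S$ (since both $s$ and $km(F)$ belong to $S$), and by the choice of $k$ the leading term of $s$ cancels, so $\operatorname{lm}(s_0)\prec \operatorname{lm}(s)$. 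If $s_0=0$ we are done; otherwise we are in the same situation with a strictly smaller leading monomial. Because $\preceq$ is a well order on $\operatorname{Mon}(A)$, this descending chain must terminate, and at termination we must have $s_0=0$ (the alternative termination condition $F_{s_0}=\emptyset$ is excluded whenever $s_0\in S\setminus\{0\}$ by the \texttt{SAGBI} property). Hence $\texttt{SNF}(s|F)=0$.

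For assertion (2), the containment $\Bbbk\langle F\rangle_A\subseteq S$ is immediate because $F\subseteq S$ and $S$ is a subalgebra of $A$. Conversely, for any $s\in S$ part (1) gives $\texttt{SNF}(s|F)=0$, and the identity displayed above writes $s$ as a $\Bbbk$-linear combination of $F$-monomials, so $s\in \Bbbk\langle F\rangle_A$. This establishes $S=\Bbbk\langle F\rangle_A$.

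The main subtlety I anticipate is justifying, in the \emph{only if} part of (1), that the algorithm cannot stop at a non-zero element while $s\in S$. This requires observing that $S$ is closed under the subtractions performed and that the \texttt{SAGBI} property applies to \emph{every} non-zero element of $S$, not only the initial one; together with well-foundedness of $\preceq$ on $\operatorname{Mon}(A)$ (Definition \ref{definitioncoefficients}(iv)), this forces termination at zero.
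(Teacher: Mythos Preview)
Your proposal is correct and follows essentially the same approach as the paper. The paper's argument for the \emph{only if} direction of (1) is phrased as a brief proof by contradiction (if $\texttt{SNF}(s|F)\neq 0$ then no $F$-monomial matches its leading monomial, contradicting the \texttt{SAGBI} property), whereas you spell out the same idea via well-founded descent and make explicit the closure of $S$ under the reduction steps; but the underlying logic is identical, and your version simply fills in details the paper leaves implicit.
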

\begin{proof}
\begin{enumerate}
\item[\rm{(1)}]  If $\texttt{SNF}(s | F)=0$, then $s=\sum\limits_{i=1}^{l} k_{i} m_{i}(F)$, where $k_{i} \in \Bbbk$, and hence $s \in S$.

Conversely, suppose that $s \in S$ and $\texttt{SNF}(s | F) \neq 0.$ This means that it cannot be reduced further, i.e. $\operatorname{lm}(\texttt{SNF}(s | F)) \neq {\rm lm}(m(F)),$ for any $F$ -monomial $m(F)$, and this contradicts that $F$ is a \texttt{SAGBI} basis.

\item[\rm{(2)}]  It follows from part (\ref{1}). More exactly, $s \in S$ if and only if $\texttt{SNF}(s | F)=0,$ that is, $s=\sum\limits_{i=1}^{l} k_{i} m_{i}(F)$, with $k_{i} \in \Bbbk.$ Therefore $s \in \Bbbk\langle F\rangle_{A},$ which shows that $S=\Bbbk\langle F\rangle_{A}$.
\end{enumerate}
\end{proof}

\subsection{\texttt{SAGBI} bases criterion}\label{SAGBIbasescriterionSPBW}

We present an algorithm to calculate \texttt{SAGBI} bases in SPBW extensions. 

As in the previous sections, below we present the definition of critical pair (c.f.\cite[Definition 6.6.26]{KreuzerRobbiano2005}, \cite[Definition 2]{Nordbeck2002}, and \cite[Definition 5]{Khanetal2019}).

\begin{definition}
Let $F$ be a subset of a SPBW extension $A$, and $m_1(F),$ $m_2(F)$ two $F$-monomials. The pair given by $\left(m_1(F), m_2(F)\right)$ is called a \textit{critical pair} of $F$ if $\operatorname{lm}(m_1(F))=\operatorname{lm}\left(m_2(F)\right)$. The $T$-{\em polynomial of a critical pair} $\left(m_1(F), m_2(F)\right)$ is defined as 
$$
T\left(m_1(F), m_2(F)\right)= m_1(F)-km_2(F), $$ 

where $k \in \Bbbk$ is such that $\operatorname{lt}(m_1(F)) = k\operatorname{lt}\left(m_2(F)\right).$
\end{definition}

The following definition is precisely our adaptation of Definition \ref{Nordbeck1998Definition1}.

\begin{definition}
Let $F$ be a subset of a SPBW extension $A$, and $\Bbbk\langle F\rangle_{A}$ the  subalgebra generated by $F$ in $A$. Let 
$$
P = \sum\limits_{i=1}^{t} k_{i} m_{i}(F) \in \Bbbk\langle F\rangle_{A}.
$$

The \textit{height} of $P$ is defined as $\operatorname{ht}(P)=\max_{i=1}^{t}\left\{\operatorname{lm}\left(m_{i}(F)\right)\right\},$ where the maximum is taken with respect to one monomial ordering $\preceq$ in $A.$
\end{definition}

The height is defined for a specific representation of elements of $A$, not for the elements itself.

The following result is one of the most important of the chapter. This is the analogous version of  \cite[Theorem 1]{Nordbeck2002} and \cite[Theorem 1]{Khanetal2019} in the case of free algebras and $G$-algebras, respectively.

\begin{theorem}\label{sagbicriterio} 
Suppose that $F$ generates $S$ as a subalgebra in the SPBW extension $A.$ Then $F$ is a {\rm {\texttt{SAGBI}}} basis of $S$ if an only if every $T$-polynomial of every critical pair of $F$ gives zero {\rm {\texttt{SAGBI}}} normal form.
\end{theorem}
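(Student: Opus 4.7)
The plan is to prove the two implications separately, with the forward direction being an immediate consequence of Proposition \ref{prop} and the converse requiring a minimal-representation argument analogous to Nordbeck's proof of Proposition \ref{Nordbeck1998Proposition3}. For the forward direction, assume $F$ is a \texttt{SAGBI} basis of $S$. Given any critical pair $(m_1(F),m_2(F))$, its $T$-polynomial $T(m_1(F),m_2(F))=m_1(F)-k m_2(F)$ is a $\Bbbk$-linear combination of $F$-monomials and hence lies in $S=\Bbbk\langle F\rangle_A$; applying Proposition \ref{prop}(1) gives $\texttt{SNF}(T(m_1(F),m_2(F))\mid F)=0$.

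For the converse, assume every $T$-polynomial of every critical pair of $F$ reduces to zero, fix a nonzero $s\in S$, and write $s=\sum_{i=1}^t k_i m_i(F)$ with $k_i\in\Bbbk\setminus\{0\}$ (possible because $F$ generates $S$). Among all such representations of $s$, I would pick one minimizing the pair $(\mathrm{ht}(s),\mathrm{br}(s))$ lexicographically, where $\mathrm{ht}(s):=\max_i \operatorname{lm}(m_i(F))$ and the breadth $\mathrm{br}(s)$ counts how many $m_i(F)$ attain that maximum. Such a minimum exists since the monomial order on $\operatorname{Mon}(A)$ is a well-order and $\mathbb{N}$ is well-ordered. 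The goal is to prove that $\mathrm{ht}(s)=\operatorname{lm}(s)$; this immediately witnesses the \texttt{SAGBI} property via Definition \ref{sagbidefinition}.

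Suppose for contradiction $\operatorname{lm}(s)\prec X:=\mathrm{ht}(s)$. Then cancellation at $X$ forces $r:=\mathrm{br}(s)\geq 2$, and after relabelling $\operatorname{lm}(m_1(F))=\cdots=\operatorname{lm}(m_r(F))=X$. Consider the critical pair $(m_1(F),m_2(F))$ and the scalar $k\in\Bbbk$ with $\operatorname{lt}(m_1(F))=k\operatorname{lt}(m_2(F))$. By hypothesis $\texttt{SNF}(T(m_1(F),m_2(F))\mid F)=0$, so $m_1(F)=km_2(F)+\sum_{l}c_l m'_l(F)$ with $c_l\in\Bbbk$ and $\operatorname{lm}(m'_l(F))\prec X$. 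Substituting gives
\begin{equation*}
s=(k_2+k_1 k)m_2(F)+\sum_{i=3}^t k_i m_i(F)+\sum_{l}k_1 c_l m'_l(F),
\end{equation*}
a new representation whose height is still at most $X$ but whose breadth at $X$ is at most $r-1$ (whether or not $k_2+k_1k$ vanishes). This contradicts minimality, hence $\operatorname{lm}(s)=X=\operatorname{lm}(m_1(F))$.

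The main obstacle I anticipate lies in the hypothesis that $F$-monomials cancel at a common leading monomial using a \emph{scalar} $k\in\Bbbk$, even though multiplication of $F$-monomials in $A$ produces leading coefficients in $R$ via the maps $\sigma_i,\delta_i$ and the structural elements $c_{i,r},d_{i,j}$ of Definition \ref{def.skewpbwextensions}. One therefore needs to verify that for the critical pair $(m_1(F),m_2(F))$ arising in the minimal representation, the leading $R$-coefficients of $m_1(F)$ and $m_2(F)$ really are $\Bbbk$-scalar multiples of each other, so that the $T$-polynomial is well defined in the sense of the paper. This is where the $\Bbbk$-algebra hypothesis on $R$ combined with the identity $\operatorname{lm}(m(F))=\operatorname{lm}(\operatorname{lm}(f_{i_1})\cdots\operatorname{lm}(f_{i_t}))$ from \S\ref{SPBWSAGBIBasicDefinitions} must be used carefully; apart from this coefficient-bookkeeping, the argument is a routine adaptation of Nordbeck's proof.
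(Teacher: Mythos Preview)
Your proof is correct and follows essentially the same route as the paper's: the forward direction is identical (an application of Proposition~\ref{prop}(1)), and for the converse both arguments pick a minimal representation, assume the height strictly exceeds $\operatorname{lm}(s)$, and use the $T$-polynomial of two $F$-monomials at the top to reduce. The only organisational difference is that the paper minimises the height alone and tacitly assumes exactly two $F$-monomials sit at height $X$, whereas you minimise the pair $(\mathrm{ht},\mathrm{br})$ lexicographically; your formulation is slightly cleaner and covers the general breadth case explicitly, but the underlying idea is the same. Your closing caveat about the existence of the scalar $k\in\Bbbk$ is well taken, but note that the paper simply \emph{defines} the $T$-polynomial via such a $k$ and works under that standing convention, so within the paper's framework this step needs no further justification.
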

\begin{proof}
If $H$ is a \texttt{SAGBI} basis of $S$ then every $T$-polynomial is an element of $S=\Bbbk\langle F\rangle_{A}$ and its \texttt{SAGBI} normal form is equal to zero by part \ref{1} of Proposition \ref{prop}.

Conversely, suppose given $0 \neq s \in S .$ It is sufficient to prove that it has a representation 
$$
s = \sum\limits_{p=1}^{t} k_{p} m_{p}(F),
$$ 

where $k_{p} \in \Bbbk$ and $m_{p}(F) \in \Bbbk\langle F\rangle_{A}$ with 
$$
\operatorname{lm}(s)=\operatorname{ht}\left(\sum\limits_{p=1}^{t} k_{p} m_{p}(F)\right)=\operatorname{lm}(m_i(F)).
$$

Let $s \in S$ given by $s = \sum\limits_{p=1}^{t} k_{p} m_{p}(F)$ with smallest possible height $X$ among all possible representations of $s$ in $S,$ that is, $X=\max _{p=1}^{t}\left\{\operatorname{lm}\left(m_{p}(F)\right)\right\}$. It is clear that $ X \succeq\operatorname{lm}(s)$.

Suppose that $X\succ\operatorname{lm}(s)$ i.e. cancellation of terms occur then there exist at least two $F$-monomials such that their leading monomial is equal to $X .$ Assume we have only two $F$-monomials $m_{i}(F), m_{j}(F)$ in the representation 
$$
s = \sum\limits_{p=1}^{t} k_{p} m_{p}(F)
$$ 

such that 
$$
\operatorname{lm}\left(m_{i}(F)\right)=\operatorname{lm}\left(m_{j}(F)\right)=X.
$$ 

If $T\left(m_{i}(F), m_{j}(F)\right)=m_{i}(F)-k m_{j}(F)$, then 
\begin{align}
s &=\sum_{p=1}^{t} k_{p} m_{p}(F) \\
&=k_{i}\left(m_{i}(F)-k m_{j}(F)\right) +\left(k_{j}+k_{i} k\right) m_{j}(F) \ +\sum_{p=1, p \neq i, j}^{t} k_{p} m_{p}(F)\\
&=k_{i} T\left(m_{i}(F), m_{j}(F)\right)+\left(k_{j}+k_{i} k\right) m_{j}(F) \ +\sum_{p=1, p \neq i, j}^{t} k_{p} m_{p}(F).\label{ec10}
\end{align}

Since $T\left(m_{i}(F), m_{j}(F)\right)$ has a zero \texttt{SAGBI} normal form, this $T$-polynomial is either zero or can be written as sum of $F$-monomials of height $\operatorname{lm}\left(T\left(m_{i}(F), m_{j}(F)\right)\right)$ which is less than $X .$ If $k_{j}+k_{i} k$ is equal to zero, then the right-hand side of expression (\ref{ec10}) is a representation of $s$ with height less than $X,$ which contradicts our initial assumption that we have chosen a representation of $s$ with smallest possible height. Otherwise, the height is preserved, but on the right-hand side of expression (\ref{ec10}), we have only one $F$-monomial $m_{j}(F)$ such that $\operatorname{lm}\left(m_{j}(F)\right)=X,$ which is a contradiction as at least two $F$-monomials of such type must exist in the representation of $s$.
\end{proof}

\begin{remark}\label{NordbeckRemark2Analogue}
From Theorem \ref{sagbicriterio} it follows that every subset $H\subset A$ consisting only of terms or monomials is a \texttt{SAGBI} basis since every $T$-polynomial is clearly equal to zero.
\end{remark}

As we can see the elements to consider for the test of the basis \texttt{SAGBI} are many and sometimes unlimited. For the case of free associative algebras over arbitrary fields, Nordbeck  \cite{Nordbeck1998} gave special conditions on the critical pairs to be taken into account for the construction of a SAGBI basis, reducing the number of such pairs, following we show two examples of SPBW extensions where we show that this construction is not valid in our object of study. From the above, a new question arises: What conditions should we impose so that in the construction of a \texttt{SAGBI} basis the critical pairs to be taken into account are reduced and thus be able to build a finite algorithm?

The following two examples show that \cite[Proposition 6]{Nordbeck1998} for free algebras does not hold for SPBW extensions.

\begin{example}
Consider the Dispin algebra $U(\mathfrak{osp}(1,2))$,  the set $F$ given by $F = \{xy, yz, xz, z^2$, $xy+xz, yz+z^2\}$ and $S=\Bbbk\langle F\rangle_A$. If we use the monomial ordering \texttt{deglex} with $x\succ y\succ z$, then we get the critical pair $(m, m')$ of $F$, where
\begin{align*}
    m = &\ (xy+xz)yz=xy^2z+xyz^2-xz^2,\quad {\rm and}\\
    m' = &\ xy(yz+z^2)=xy^2z+xyz^2.
\end{align*}

Notice that $m=m_1m_2$ and $m'=m'_1m'_2$, with $\operatorname{lm}(m_{i}(F))=\operatorname{lm}(m_{i}^{\prime}(F))$, $i=1,2$. It follows that 
$$
T(m_{1}(F), m_{1}^{\prime}(F)) = xz
$$ 

is a monomial of $S$ with height less than $\operatorname{lm}(m_1(F))=xy$, and $T(m_{2}(F),$ $m_{2}^{\prime}(F))=z^2$ is a monomial of $S$ with height less than $\operatorname{lm}(m_2(F))=yz$. Nevertheless, the $T$-polynomial 
$$
T(m(F), m^{\prime}(F))=-xz^2
$$ 

is not a sum of monomials of $S$, which shows that \cite[Proposition 6]{Nordbeck1998} does not hold in the setting of SPBW extensions.
\end{example}

\begin{example}
Consider the {\em Sklyanin algebra} 
$$
\mathcal{S} = \Bbbk\{x, y, z\} /\langle ayx + bxy + cz^2, axz + bzx + cy^2, azy + byz + cx^2\rangle,
$$

where $a, b, c\in \Bbbk$. If $c=0$ and $a,b\neq 0$ then we obtain the defining relations 
$$
yx = -\displaystyle\frac{b}{a}xy, \quad zx = -\displaystyle\frac{a}{b}xz, \quad {\rm and} \quad zy = -\displaystyle\frac{b}{a}yz.
$$

It can be seen that $\mathcal{S}\cong\sigma(\Bbbk)\langle x,y,z\rangle$ \cite[Theorem 1.14]{ReyesSuarezMomento2017}. Let the set $F\subseteq \mathcal{S}$ given by
\[
F = \{yz+z^2, yz+z, z^2-z\}
\] 

and $S=\Bbbk\langle F\rangle_{\mathcal{S}}$. If we use the monomial ordering \texttt{deglex} with $x\succ y\succ z$, then let $(m, m')$ be a critical pair of $F$, with
\begin{align}
    m = &\ (yz+z^2)(yz+z^2)=-\displaystyle\frac{b}{a}y^2z^2+yz^3+\displaystyle\frac{b^2}{a^2}yz^3+z^4 \quad {\rm and} \\
    m' = &\ (yz+z)(yz+z)=-\displaystyle\frac{b}{a}y^2z^2+yz^2-\displaystyle\frac{b}{a}yz^2+z^2.
\end{align}

Note that $m = m_1m_2$ and $m' = m'_1m'_2$, where $\operatorname{lm}(m_{i}(F))=\operatorname{lm}(m_{i}^{\prime}(F))$, for $i = 1, 2$, and $T(m_{1}(F),$ $m_{1}^{\prime}(F))=z^2-z$ is a monomial of $S$ with height less than $\operatorname{lm}(m_1(F))=yz$; similarly, $T(m_{2}(F),$ $m_{2}^{\prime}(F))=z^2-z$ is a monomial of $S$ with height less than $\operatorname{lm}(m_2(F))=yz$. However, 
$$
T(m(F), m^{\prime}(F))=\left(1+\displaystyle\frac{b^2}{a^2}\right) yz^3+z^4+\left(\displaystyle\frac{b}{a}-1\right)yz^2-z^2
$$ 

is not a sum of monomials of $S$. Once more again, this shows that \cite[Proposition 6]{Nordbeck1998} does not hold.
\end{example}

\section{\texttt{SAGBI} bases under composition}\label{SPBWBasesundercomposition}

In this section, we present the original results about \texttt{SAGBI} bases under composition in the setting of SPBW extensions. We will consider similar definitions to those corresponding investigated by Nordbeck \cite{NordbeckPhD2001} (see also \cite{Nordbeck2002}), and the notation used in Section  \ref{SPBWSAGBIBasicDefinitions}.
\begin{enumerate}
\item [\rm (1)] Let $A = \sigma(R)\left\langle x_{1}, \dotsc, x_{n}\right\rangle$ be a SPBW extension over $R$ and $F$ a finite set of non-zero elements of $A$.

\item [\rm (2)] $\Bbbk\langle F\rangle_{A}$ means {\em the subalgebra of} $A$ {\em generated by} $F$, that is, the polynomial set in the $F$-variables in $A$.

\item [\rm (3)] $m(F)$ denotes a monomial in terms of the elements of $F$, and we call it an $F$-{\em monomial}.

\item [\rm (4)] When we speak of the leading monomial, leading coefficient and leading term of an element in $\Bbbk\langle F\rangle_{A}$, we will always mean the leading monomial, leading coefficient and leading term, respectively, of the element viewed as an element of $A$, relative to the fix monomial ordering in ${\rm Mon}(A)$. Let $\operatorname{lm}(F):=\{\operatorname{lm}(f) \mid f \in F\}$. 

\item [\rm (5)] Let $S$ be a subalgebra of $A$. A subset $H\subset S$ is called a \texttt{SAGBI} basis for $S$ if for every non-zero $s\in S$, there exists a monomial $m$ such that $\widehat{s} = \widehat{m(H)}$.

\item [\rm (6)] Since monomial orderings are preserved by multiplication, \label{(6)}
\[
m(H) = \prod\limits_i h_i\ (h_i \in H)\ {\rm implies}\ \widehat{m(H)} = \prod\limits_i \widehat{h_i}
\]
that is,
\begin{equation}\label{Nordbeck2002(1)Analogue}
\widehat{m(H)} = m(\widehat{H}).    
\end{equation}

This means that an equivalent formulation of \cite[Definition 1]{Nordbeck2002} is that $H$ is a \texttt{SAGBI} basis if the leading term of every non-zero element in $S$ can be written as a product of leading terms of elements in $H$ \cite[Remark 1]{Nordbeck2002}. It can be seen that if $H$ is a \texttt{SAGBI} basis for $S$, then $H$ generates $S$, that is, $S = \Bbbk\langle H\rangle_{A}$. We say that $H$ is a \texttt{SAGBI} basis meaning that $H$ is a \texttt{SAGBI} basis for $\Bbbk\langle H\rangle_{A}$.

\item [\rm (7)] We say that two monomials $m, m'$ form a {\em critical pair} $(m, m')$ of $H$ if $\widehat{m(H)} = \widehat{m'(H)}$. If $c\in \Bbbk$ is such that $m(H)$ and $cm'(H)$ have the same leading coefficient, then we define the $T$-{\em polynomial} of $(m, m')$ \cite[Definition 2]{Nordbeck2002} as 
$$
T(m, m') = m(H) - cm'(H).
$$

The idea with the constant $c$ is that the leading terms cancel in $T(m, m')$, whence $\widehat{T(m, m')} < \widehat{m(H)} = \widehat{m'(H)}$.

\item [\rm (8)] Every subset $H\subset A$ consisting only of terms (or terms times coefficients) is a \texttt{SAGBI} basis.

\item [\rm (9)] Let $\Theta = \{\theta_1, \dotsc, \theta_n\}$ be a subset of $A$, and let $f\in A$. The {\em composition} of $f$ by $\Theta$, written $f\circ \Theta$, is the polynomial obtained from $f$ by replacing each ocurrence of the $x_i$ with $\theta_i$. For a subset $F\subset A$, $F\circ \Theta := \left\{f\circ \Theta \mid f\in F\right\}$. Once more again, it is assumed that $\theta_i \notin \Bbbk$ for all $i$. This is used to guarantee that $X\neq 1$ implies $X\circ \widehat{\Theta} \neq 1$, for every $X\in {\rm Mon}(A)$. It is straightforward to see that two forms of compositions are associative in the following sense:
\begin{equation}\label{Nordbeck2002(3)analogue}
m(H) \circ \Theta = m(H\circ \Theta).
\end{equation}

\item [\rm (10)] Notice that the notion $m(H\circ \Theta)$ makes sense due to the natural correspondence between the sets $H = \left\{h_1, h_2, \dotsc, \right\}$ and $H\circ \Theta = \left\{h_1 \circ \Theta, h_2 \circ \Theta, \dotsc\right\}$. Note also that for elements $f, g \in A$, 
\begin{align}
    (fg) \circ \Theta = &\ f\circ \Theta g \circ \Theta, \label{Nordbeck2002(4)analogue} \\
    (f + g)\circ \Theta = &\ f\circ \Theta + g \circ \Theta \label{Nordbeck2002(5)analogue} 
\end{align}
    
Having in mind that the order is preserved by multiplication, in a similar way to \cite[Remark 1]{Nordbeck2002}, we get that 
\begin{equation}\label{Nordbeck2002(6)analogue}
    \widehat{X \circ \Theta} = X \circ \widehat{\Theta},\quad {\rm for\ every}\ X\in {\rm Mon}(A).
\end{equation}

\item [\rm (11)] Let $\Theta = \left\{\theta_1, \dotsc, \theta_n\right\}$ be a subset of $A$. Composition by $\Theta$ is {\em compatible with the monomial ordering} $\prec$ if for all monomials $X_i, X_j\in {\rm Mon}(A)$, we have that 
\begin{equation}\label{Nordbeck2002(7)analogue}
X_i \prec X_j \quad {\rm implies}\quad X_i \circ \widehat{\Theta} \prec X_j \circ \widehat{\Theta}.
\end{equation}

For an element $f\in A$ written as a linear combination of monomials in decreasing order $f = \sum\limits_{i = 1}^t r_iX_i$, $X_1 \succ \dotsb \succ X_t$ (Remark \ref{notesondefsigampbw}  (iv)), if composition by $\Theta$ is compatible with the monomial ordering $\prec$, then $X_1 \circ \widehat{\Theta} \succ \dotsb \succ X_s \circ \widehat{\Theta}$, so expressions (\cite[Remark 4 (5) and (6)]{Nordbeck2002} guarantee that
\begin{equation}\label{Nordbeck2002(8)analogue}
    \widehat{f \circ \Theta} = \widehat{f} \circ \widehat{\Theta}.
\end{equation}

Composition by $\Theta$ is {\em compatible with nonequality} if for all monomials $X_i, X_j \in {\rm Mon}(A)$, we have
\begin{equation}
    X_i\neq X_j\quad {\rm implies}\quad X_i \circ \widehat{\Theta} \neq X_j\circ \widehat{\Theta}.
\end{equation}

Since monomial orderings are total, if composition by $\Theta$ is compatible with the monomial ordering $\prec$, then composition by $\Theta$ is compatible with nonequality \cite[Lemma 1]{Nordbeck2002}.
\end{enumerate}

Lemma \ref{Nordbeck2002Lemma2Analogue} and Proposition \ref{Nordbeck2002Proposition1Analogue} are the analogues of \cite[Lemma 2 and Proposition 1]{Nordbeck2002}, respectively. We need these two results to prove the sufficiency of the compatibility with the ordering in Theorem \ref{Nordbeck2002Theorem4Analogue}.

\begin{lemma}\label{Nordbeck2002Lemma2Analogue}
Suppose that the composition by $\Theta$ is compatible with the ordering $\prec$. If $(m, m')$ is a critical pair of $H \circ \Theta$, then $(m, m')$ is also a critical pair of $H$.
\end{lemma}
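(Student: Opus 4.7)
The plan is to unwind the definition of a critical pair for $H\circ\Theta$ and translate it, via the identities collected in items (9)--(11) of the preceding list, back to a statement about $H$. By definition, $(m,m')$ is a critical pair of $H\circ\Theta$ precisely when
\[
\widehat{m(H\circ\Theta)} \;=\; \widehat{m'(H\circ\Theta)}.
\]

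First I would rewrite each side using the associativity identity (\ref{Nordbeck2002(3)analogue}), namely $m(H\circ\Theta)=m(H)\circ\Theta$ (and similarly for $m'$), to obtain
\[
\widehat{m(H)\circ\Theta} \;=\; \widehat{m'(H)\circ\Theta}.
\]
Next, since composition by $\Theta$ is assumed compatible with the monomial ordering $\prec$, identity (\ref{Nordbeck2002(8)analogue}) applies to each side, transforming the above into
\[
\widehat{m(H)}\circ\widehat{\Theta} \;=\; \widehat{m'(H)}\circ\widehat{\Theta}.
\]

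The final step is to peel off $\widehat{\Theta}$. Here I would invoke the remark immediately following the definition of compatibility with nonequality: since monomial orderings are total, compatibility with the ordering forces compatibility with nonequality. Consequently, the last equality forces $\widehat{m(H)}=\widehat{m'(H)}$, which is precisely the statement that $(m,m')$ is a critical pair of $H$.

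The main obstacle is modest and already addressed in the preliminary list: one must be confident that (\ref{Nordbeck2002(3)analogue}) and (\ref{Nordbeck2002(8)analogue}) really do hold in the SPBW setting. The former is a formal substitution identity following from (\ref{Nordbeck2002(4)analogue}) and (\ref{Nordbeck2002(5)analogue}); the latter uses the compatibility hypothesis together with the observation that the leading monomial is detected by the largest surviving $X_i\circ\widehat{\Theta}$ in the expansion of $f\circ\Theta$. With these in place, the lemma reduces to the three-line chain of equalities above.
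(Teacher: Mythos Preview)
Your proof is correct and follows essentially the same route as the paper's: both use the associativity identity (\ref{Nordbeck2002(3)analogue}) to rewrite $m(H\circ\Theta)$ as $m(H)\circ\Theta$, then pass to leading data via a $\widehat{f\circ\Theta}=\widehat{f}\circ\widehat{\Theta}$-type identity, and finally cancel $\widehat{\Theta}$ using that compatibility with the ordering implies compatibility with nonequality. The only cosmetic difference is that you invoke (\ref{Nordbeck2002(8)analogue}) directly (which is appropriate, since $m(H)$ is a polynomial and (\ref{Nordbeck2002(8)analogue}) is exactly the consequence of the compatibility hypothesis needed here), whereas the paper cites (\ref{Nordbeck2002(6)analogue}); the underlying argument is identical.
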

\begin{proof}
Suppose that $(m, m')$ is a critical pair of $H \circ \Theta$. By (\ref{Nordbeck2002(3)analogue}), 
$$
m(H \circ \Theta) = m(H) \circ \Theta \quad {\rm and} \quad m'(H\circ \Theta) = m'(H) \circ \Theta, 
$$ 

whence ${\rm lt}(m (H \circ \Theta)) = {\rm lt} (m'(H\circ \Theta))$ and so ${\rm lt}(m(H)) \circ {\rm lt}(H) = {\rm lt}(m'(H)) \circ {\rm lt}(\Theta)$ due to (\ref{Nordbeck2002(6)analogue}). Now, by assumption the composition is compatible with the ordering, and hence with the nonequality, which implies that ${\rm lt}(m(H)) = {\rm lt} (m'(H))$, i.e. $(m, m')$ is a critical pair of $H$.
\end{proof}

The following result illustrates the necessity of the compatibility with the ordering.

\begin{proposition}\label{Nordbeck2002Proposition1Analogue}  
If composition by $\Theta$ is compatible with the ordering $\prec$, then composition by $\Theta$ commutes with noncommutative {\rm \texttt{SAGBI}} bases computation.
\end{proposition}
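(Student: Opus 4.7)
The plan is to invoke the SAGBI criterion of Theorem \ref{sagbicriterio} as the single tool. Interpreting the statement as: if $H$ is a {\rm \texttt{SAGBI}} basis for $\Bbbk\langle H\rangle_A$, then $H\circ\Theta$ is a {\rm \texttt{SAGBI}} basis for $\Bbbk\langle H\circ\Theta\rangle_A$, it suffices to fix an arbitrary critical pair $(m,m')$ of $H\circ\Theta$ and show that its $T$-polynomial has zero {\rm \texttt{SAGBI}} normal form over $H\circ\Theta$. The first move is to pull the critical pair back to $H$: by Lemma \ref{Nordbeck2002Lemma2Analogue}, $(m,m')$ is also a critical pair of $H$, so the hypothesis yields useful information about the $H$-version of the $T$-polynomial.

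The second step is to verify the compatibility of the $T$-polynomial with composition, namely
\[
T(m(H\circ\Theta),m'(H\circ\Theta)) = T(m(H),m'(H))\circ\Theta.
\]
Expanding via the associativity identities (\ref{Nordbeck2002(3)analogue}), (\ref{Nordbeck2002(4)analogue}) and (\ref{Nordbeck2002(5)analogue}) reduces this to checking that the same scalar $k\in\Bbbk$ equalizes the leading terms on both sides, which is ensured by (\ref{Nordbeck2002(8)analogue}) since composition by $\Theta$ is compatible with $\prec$ and the leading coefficient on each side reads off the same way from $\widehat{m(H)}\circ\widehat{\Theta}$.

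The third step transports the reduction. Since $H$ is a {\rm \texttt{SAGBI}} basis, Theorem \ref{sagbicriterio} together with the argument used in its proof shows that $T(m(H),m'(H))$ admits a \emph{low representation}
\[
T(m(H),m'(H)) = \sum_{i} k_i\, m_i(H),\qquad k_i\in\Bbbk,
\]
with each $\operatorname{lm}(m_i(H))\prec \operatorname{lm}(m(H))$. Composing with $\Theta$ and applying (\ref{Nordbeck2002(3)analogue})–(\ref{Nordbeck2002(5)analogue}) gives
\[
T(m(H\circ\Theta),m'(H\circ\Theta)) = \sum_{i} k_i\, m_i(H\circ\Theta).
\]
Now by (\ref{Nordbeck2002(6)analogue}) each $\operatorname{lm}(m_i(H\circ\Theta))=\operatorname{lm}(m_i(H))\circ \widehat{\Theta}$ and likewise for $m(H\circ\Theta)$, so the compatibility hypothesis (\ref{Nordbeck2002(7)analogue}) promotes the inequalities $\operatorname{lm}(m_i(H))\prec\operatorname{lm}(m(H))$ to $\operatorname{lm}(m_i(H\circ\Theta))\prec\operatorname{lm}(m(H\circ\Theta))$. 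This is exactly a low representation of the $T$-polynomial over $H\circ\Theta$, and another application of the argument in the proof of Theorem \ref{sagbicriterio} converts it into the vanishing of the {\rm \texttt{SAGBI}} normal form. Since $(m,m')$ was arbitrary, $H\circ\Theta$ satisfies the criterion and is a {\rm \texttt{SAGBI}} basis.

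The step I expect to be the main obstacle is the transfer of heights in the third paragraph: the low representation only gives strict inequalities among $H$-monomials, and one must be sure that applying $\Theta$ does not collapse any of those inequalities into equalities or reverse them. This is precisely where the compatibility hypothesis (\ref{Nordbeck2002(7)analogue}) is indispensable, and removing it breaks the argument. A secondary bookkeeping issue is making the implicit equivalence between \emph{having a low representation} and \emph{reducing to zero} used in the proof of Theorem \ref{sagbicriterio} fully explicit in the SPBW setting, so that it can be cited on both sides of the composition.
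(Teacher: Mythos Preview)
Your proposal is correct and follows essentially the same route as the paper: pull the critical pair back to $H$ via Lemma \ref{Nordbeck2002Lemma2Analogue}, apply Theorem \ref{sagbicriterio} to obtain a low representation, compose with $\Theta$ using (\ref{Nordbeck2002(3)analogue})--(\ref{Nordbeck2002(5)analogue}), and transfer the height inequalities through (\ref{Nordbeck2002(6)analogue}) and (\ref{Nordbeck2002(7)analogue}). The paper handles the scalar-matching step you singled out by simply noting at the end that the leading terms cancel in the composed expression, forcing the constant $c$ to coincide with that of the $T$-polynomial for $H\circ\Theta$; your use of (\ref{Nordbeck2002(8)analogue}) for this is an equivalent (and arguably cleaner) justification.
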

\begin{proof}
Consider an arbitrary \texttt{SAGBI} basis $H$. We have to show that $H\circ \Theta$ is also a \texttt{SAGBI} basis. Consider an arbitrary critical pair of $H\circ \Theta$. By Lemma \ref{Nordbeck2002Lemma2Analogue} we know that $(m, m')$ is also a critical pair of $H$. Theorem \ref{sagbicriterio} guarantees the expression
\begin{equation}\label{Nordbeck2002(10)Analogue}
    m(H) - cm'(H) = \sum_i c_i m_i(H)\quad {\rm (or\ zero)},\quad \widehat{m_i(H)} \prec \widehat{m(H)} = \widehat{m'(H)},\quad {\rm for\ all}\ i.
\end{equation}

If we compose the $T$-polynomial by $\Theta$, then expressions (\ref{Nordbeck2002(3)analogue}), (\ref{Nordbeck2002(4)analogue}), and (\ref{Nordbeck2002(5)analogue}) guarantee
\begin{equation}\label{Nordbeck2002(11)Analogue}
    m(H\circ \Theta) - cm'(H \circ \Theta) = \sum_i c_im_i(H\circ \Theta)\quad {\rm (or\ zero)},
\end{equation}

Now, if we compose the inequality in expression (\ref{Nordbeck2002(10)Analogue}) by $\widehat{\Theta}$, by (\ref{Nordbeck2002(6)analogue}) and (\ref{Nordbeck2002(7)analogue}) we get
\begin{equation}\label{Nordbeck2002(12)Analogue}
    \widehat{m_i(H\circ \Theta)} \prec \widehat{m(H\circ \Theta)} = \widehat{m'(H\circ \Theta)}, \quad {\rm for\ all}\ i.
\end{equation}

Notice that the leading terms in the left-hand side of expression (\ref{Nordbeck2002(11)Analogue}) cancel, so the constant $c$ must be the same as in the definition of the $T$-polynomial of $(m, m')$ with respect to $H\circ \Theta$. Therefore, expressions (\ref{Nordbeck2002(11)Analogue}) and (\ref{Nordbeck2002(12)Analogue}) are a representation in the sense of Theorem \ref{sagbicriterio}, and having in mind that the critical pair $(m, m')$ of $H\circ \Theta$ was arbitrary, it follows that $H\circ \Theta$ is a \texttt{SAGBI} basis.
\end{proof}

Now, the proof of the necessity of the compatibility with the ordering in Theorem \ref{Nordbeck2002Theorem4Analogue} requires of Lemma \ref{Nordbeck2002Lemma3Analogue} and Proposition \ref{Nordbeck2002Proposition2Analogue} which are the analogue versions of \cite[Lemma 3 and Proposition 2]{Nordbeck2002}, respectively.

\begin{lemma}\label{Nordbeck2002Lemma3Analogue}
If $u, v$ are two monomials  with $u\neq v$ but $u\circ \widehat{\Theta} = v\circ \widehat{\Theta}$, then for every $w\prec u$, $H = \{u - w, v\}$ is a {\rm \texttt{SAGBI}} basis.
\end{lemma}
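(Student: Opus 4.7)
The plan is to invoke Theorem \ref{sagbicriterio} and verify that every $T$-polynomial of every critical pair of $H=\{u-w,v\}$ has zero \texttt{SAGBI} normal form. First I would characterise the critical pairs. Since $w\prec u$ we have $\operatorname{lm}(u-w)=u$ and $\operatorname{lm}(v)=v$, and by the SPBW multiplication rule the leading monomial of a product of standard monomials $X_{i_1}\cdots X_{i_k}$ is $x^{\sum_j\exp(X_{i_j})}$, independent of the order of the factors. Writing $u=x^{\alpha}$ and $v=x^{\beta}$, the leading monomial of any $H$-monomial therefore depends only on the multiplicities $(a,b)$ of the two generators, and equals $x^{a\alpha+b\beta}$. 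Consequently, two $H$-monomials form a critical pair precisely when they involve the same counts $(a,b)$.

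For such a critical pair $(m_1,m_2)$, I would decompose each $m_i(H)$ via distributivity by expanding every factor $(u-w)$ as $u+(-w)$, writing
\[
m_i(H)=M_i(u,v)+L_i,
\]
where $M_i(u,v)$ is the pure product of $u$'s and $v$'s obtained by always selecting $u$, and $L_i$ collects all the mixed terms in which at least one factor was replaced by $-w$. The difference $M_1-c\,M_2$ lies in the subalgebra generated by the monomial set $\{u,v\}$, which is itself a \texttt{SAGBI} basis by Remark \ref{NordbeckRemark2Analogue}; this both forces the constant $c\in\Bbbk$ appearing in $T(m_1,m_2)=m_1(H)-c\,m_2(H)$ and guarantees that the leading monomial of $M_1-cM_2$ sits strictly below $x^{a\alpha+b\beta}$. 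On the other side, since each $w$-substitution strictly lowers the weight (using $w\prec u$ and the fact that the SPBW order is compatible with products), every summand of $L_1-cL_2$ likewise has leading monomial strictly below $x^{a\alpha+b\beta}$.

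I would then close the argument by induction on the weight $(a,b)$ in, say, lexicographic order on $\mathbb{N}^{2}$, appealing at each stage to the inductive hypothesis together with Algorithm \ref{Algorithm1} to carry out the one-step reductions, while using the well-ordering of the monomial order to guarantee termination. The main obstacle I anticipate is the bookkeeping inside $L_1-cL_2$: after moving the inserted copies of $w$ past the surrounding factors via the SPBW commutation relations from Proposition \ref{sigmadefinition} and Definition \ref{def.skewpbwextensions}(iv), one must check that each surviving leading monomial is realised as the leading monomial of an explicit $H$-monomial of weight strictly less than $(a,b)$, so that the inductive step truly closes. This step is where the hypothesis $u\ne v$, and hence the genuine decrement in weight at every $w$-substitution, does the essential work.
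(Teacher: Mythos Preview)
Your argument has a genuine gap: you never use the hypothesis $u\circ\widehat{\Theta}=v\circ\widehat{\Theta}$. You invoke only $u\neq v$, but that is a separate, much weaker, assumption. This shows up immediately in your characterisation of critical pairs. From $\operatorname{lm}(m(H))=x^{a\alpha+b\beta}$ you conclude that a critical pair forces the multiplicity vector $(a,b)$ to agree, i.e.\ that $a\alpha+b\beta=a'\alpha+b'\beta$ implies $(a,b)=(a',b')$. That is false in general: take $u=x_1^{2}$, $v=x_1$, so $\alpha=(2,0,\ldots)$, $\beta=(1,0,\ldots)$; then $(1,0)$ and $(0,2)$ give the same exponent. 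It is precisely the hypothesis $u\circ\widehat{\Theta}=v\circ\widehat{\Theta}$ that rules this out, and this is the heart of the paper's proof: from a nontrivial relation $i\alpha=j\beta$ one composes with $\widehat{\Theta}$ to get $(u\circ\widehat{\Theta})^{i}=(v\circ\widehat{\Theta})^{j}$, and since $u\circ\widehat{\Theta}=v\circ\widehat{\Theta}\neq1$ this forces $i=j$ and then $u=v$, a contradiction. In other words, the paper does not reduce $T$-polynomials at all; it shows that once $(a,b)$ is forced to coincide there are \emph{no} nontrivial critical pairs, so every $T$-polynomial is identically zero.

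Even restricting to critical pairs with equal $(a,b)$, your inductive scheme does not close. You split $T(m_1,m_2)=(M_1-cM_2)+(L_1-cL_2)$ and correctly observe that every summand has leading monomial strictly below $x^{a\alpha+b\beta}$. But to feed this back into Theorem~\ref{sagbicriterio} you must exhibit, for the leading monomial of $T(m_1,m_2)$, an $H$-monomial with that same leading monomial; after the SPBW commutations and the $w$-substitutions there is no reason this leading monomial is of the form $x^{a'\alpha+b'\beta}$ at all. The ``obstacle'' you flag at the end is therefore not a bookkeeping issue but the very statement you are trying to prove, so the induction is circular. The fix is to abandon the reduction-by-weight strategy and instead use the composition hypothesis as the paper does.
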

\begin{proof}
It is clear that both $u$ and $v$ must be different from $1$. If this is not the case, say $v = 1$, then $u\neq 1$, and having in mind that the elements $\theta_i$'s are nonconstant, $v\circ \widehat{\Theta} = u\circ \widehat{\Theta} \neq 1$, a contradiction.

The idea is to show that $H$ has no non-trivial critical pairs, that is, if $\widehat{m(H)} = \widehat{m'(H)}$ then $m = m'$, because in this situation $H$ is a \texttt{SAGBI} basis, and hence every $T$-polynomial must be identically equal to zero. Let us see the proof by contradiction.

Let $(m, m')$ be a non-trivial arbitrary critical pair of $H$. Notice that $\widehat{H} = \{u, v\}$, and so $\widehat{m(H)} = m(\widehat{H}) = u^k v^l$ and $\widehat{m'(H)} = u^s v^t$, and $u^k v^l = u^s v^t$. Since the critical pair is non-trivial, then $k\neq s$ and $l\neq t$, which means that $k > s$ and $l < t$ or vice versa. In this way, it follows that $u^{i} = v^{j}$, with $i, j \ge 0$. If we compose this equality by $\widehat{\Theta}$, expression (\ref{Nordbeck2002(4)analogue}) implies that $(u\circ \widehat{\Theta})^{i} = (v\circ \widehat{\Theta})^{j}$. By using that $u\circ \widehat{\Theta} = v\circ \widehat{\Theta} \neq 1$, it follows that $i = j$. Finally, the equality $u^{i} = v^{j}$ implies $u = v$, a contradiction, whence the critical pair $(m, m')$ is trivial. We conclude that $H$ is a \texttt{SAGBI} basis.
\end{proof}

\begin{proposition}\label{Nordbeck2002Proposition2Analogue}
If composition by $\Theta$ commutes with noncommutative {\rm \texttt{SAGBI}} bases computation, then composition by $\Theta$ is compatible with nonequality.
\end{proposition}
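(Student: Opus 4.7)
The plan is to establish the contrapositive: assuming composition by $\Theta$ commutes with \texttt{SAGBI} basis computation, I will deduce that it is compatible with nonequality. Suppose for contradiction that monomials $u\neq v$ in $\operatorname{Mon}(A)$ satisfy $u\circ\widehat{\Theta}=v\circ\widehat{\Theta}=:U$. Because each $\theta_{i}$ is non-constant, no leading monomial $\widehat{\theta_{i}}$ equals $1$, so the condition $X\circ\widehat{\Theta}=1$ forces $X=1$; in particular $u,v\neq 1$, and after swapping if necessary I may assume $v\prec u$.

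For every $w\prec u$, Lemma \ref{Nordbeck2002Lemma3Analogue} gives that $H:=\{u-w,v\}$ is a \texttt{SAGBI} basis, so by the commutation hypothesis $H\circ\Theta=\{u\circ\Theta-w\circ\Theta,\ v\circ\Theta\}$ is also a \texttt{SAGBI} basis. Pick $c\in\Bbbk^{*}$ with $\operatorname{lt}(u\circ\Theta)=c\,\operatorname{lt}(v\circ\Theta)$, which exists because both elements have leading monomial $U$. Provided $w$ is chosen with $w\circ\widehat{\Theta}\prec U$, both generators of $H\circ\Theta$ have leading monomial $U$ and form a critical pair whose $T$-polynomial is
\[
T=(u\circ\Theta-w\circ\Theta)-c\,(v\circ\Theta)=\bigl(u\circ\Theta-c\,v\circ\Theta\bigr)-w\circ\Theta.
\]
Since $\widehat{H\circ\Theta}=\{U\}$, Theorem \ref{sagbicriterio} forces $\widehat{T}$ to be a non-negative power of $U$. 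The remainder of the proof contradicts this by an appropriate choice of $w$.

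When $u\circ\Theta\neq c\,v\circ\Theta$, the difference $u\circ\Theta-c\,v\circ\Theta$ is non-zero with leading monomial $Y\prec U$, and taking $w=1$ yields $\widehat{T}=Y$, immediately failing $\widehat{T}\in\{U^{k}\}_{k\ge 0}$ unless $Y=1$. The main obstacle is the degenerate case $u\circ\Theta=c\,v\circ\Theta$, where $T=-w\circ\Theta$ and so $\widehat{T}=w\circ\widehat{\Theta}$, forcing me to produce a non-constant $w\prec u$ with $w\circ\widehat{\Theta}\notin\{U^{k}\}_{k\ge 0}$. I would secure this by first replacing the witness pair $(u,v)$ by $(u^{N},v^{N})$ for $N$ sufficiently large---the relation $u^{N}\circ\widehat{\Theta}=v^{N}\circ\widehat{\Theta}$ and $u^{N}\neq v^{N}$ persists, while the degree-compatibility of $\preceq$ (Definition \ref{definitioncoefficients}(iv)(c)) yields abundantly many monomials strictly below $u^{N}$---and then invoking a pigeonhole comparison between this rich pool and the sparse cyclic set $\{U^{kN}\}_{k\ge 0}$ to extract a suitable $w$. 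The resulting $T$-polynomial violates Theorem \ref{sagbicriterio} for $H\circ\Theta$, giving the contradiction and proving compatibility with nonequality.
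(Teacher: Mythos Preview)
Your outline is sound in the generic branch, but the degenerate case $u\circ\Theta=c\,v\circ\Theta$ (and the allied sub-case $Y=1$, which you flag but do not treat) remains open, and the proposed patch via $(u^{N},v^{N})$ does not go through in the SPBW setting. First, in a SPBW extension a power $u^{N}$ of a standard monomial $u=x^{\alpha}$ is in general \emph{not} a standard monomial: the defining relations $x_{j}x_{i}=d_{i,j}x_{i}x_{j}+(\text{lower terms})$ force $u^{N}\notin\operatorname{Mon}(A)$, so Lemma~\ref{Nordbeck2002Lemma3Analogue}, which is stated for monomials, cannot be applied to the pair $(u^{N},v^{N})$. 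If instead you intend the standard monomial $x^{N\alpha}$, the needed equality $x^{N\alpha}\circ\widehat{\Theta}=x^{N\beta}\circ\widehat{\Theta}$ does not follow from $x^{\alpha}\circ\widehat{\Theta}=x^{\beta}\circ\widehat{\Theta}$, since $(x^{\alpha})^{N}\neq x^{N\alpha}$ in $A$. Second, even granting such a replacement, the pigeonhole step is only gestured at: you must exhibit a monomial $w$ satisfying simultaneously $1\prec w\prec u$ \emph{and} $w\circ\widehat{\Theta}\prec U$, and without compatibility with $\prec$ (precisely the property under proof) the second condition is independent of the first.

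The paper's proof sidesteps this search entirely. It first uses the trivial \texttt{SAGBI} basis $H=\{u,v\}$ (Remark~\ref{NordbeckRemark2Analogue}) and considers $f=u\circ\Theta-v\circ\Theta\in\Bbbk\langle H\circ\Theta\rangle_{A}$: when $f$ is neither zero nor a constant, $\widehat{f}\prec U$ already contradicts the \texttt{SAGBI} property of $H\circ\Theta$. For the two remaining cases it multiplies by a single variable, setting $u'=ux_{i}$, $v'=vx_{i}$. If $f$ is a non-zero constant, then $H'=\{u',v'\}$ and $u'\circ\Theta-v'\circ\Theta=f\,\theta_{i}$ does the job; if $f=0$, Lemma~\ref{Nordbeck2002Lemma3Analogue} is invoked once with the concrete choice $w=x_{i}$, so that $H'=\{u'+x_{i},\,v'\}$ is \texttt{SAGBI} and $(u'+x_{i})\circ\Theta-v'\circ\Theta=\theta_{i}$ lies in the subalgebra with $\widehat{\theta_{i}}\prec u'\circ\widehat{\Theta}$. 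The idea missing from your attempt is this explicit construction: rather than a counting argument to locate $w$, one manufactures the offending subalgebra element as a single non-constant generator $\theta_{i}$ after enlarging $u$ and $v$ by one variable.
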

\begin{proof}
Let $\Theta$ be commuting with \texttt{SAGBI} bases computation. Once more again, we proceed by contradiction. 

Suppose that there exist two different monomials $u, v \in {\rm Mon}(A)$ but $u\circ \widehat{\Theta} = v \circ \widehat{\Theta}$. As we saw in the proof of Lemma \ref{Nordbeck2002Lemma3Analogue}, $u, v \neq 1$. Since every subset $H\subset A$ consisting only of monomials (or terms) is a \texttt{SAGBI} bases, $H = \{u, v\}$ is a \texttt{SAGBI} basis, and so $H \circ \Theta = \{u\circ \Theta, v\circ \Theta\}$ also is. Hence, if 
$$
f = u\circ \Theta - v\circ \Theta \in \Bbbk\langle H \circ \Theta \rangle_{A}
$$ 

is not equal to 1 or zero, then the assertion follows. Since $\widehat{f} \prec u \circ \widehat{\Theta} = v \circ \widehat{\Theta}$, $\widehat{f}$ cannot be written as a product from $\widehat{H \circ \Theta} = \{u\circ \widehat{\Theta}, v\circ \widehat{\Theta}\}$, which means that $H\circ \Theta$ cannot be a \texttt{SAGBI} basis \cite[Remark 1]{Nordbeck2002}.

Let $u' = ux_i$ and $v' = vx_i$ for some indeterminate $x_i$ in the SPBW extension of $A$. Note that $u' \circ \Theta = (u \circ \Theta) \theta_i$ and 
\begin{equation}\label{Nordstrom2002(13)Analogue}
    u' \neq v'\quad {\rm and}\quad \widehat{u'\circ \Theta} = \widehat{v' \circ \Theta}.
\end{equation}

If $f = u\circ \Theta - v\circ \Theta = 1$, then we consider $H' = \{u', v'\}$ which is a \texttt{SAGBI} basis by Remark \ref{NordbeckRemark2Analogue}. Now,
\[
f' = u'\circ \Theta - v'\circ \Theta = (u \circ \Theta)\theta_i - (v\circ \Theta)\theta_i = f\theta_i = \theta_i \in \Bbbk\langle H' \circ \Theta \rangle_{A},
\]

and, once more again, it follows that $H = \{u' \circ \Theta, v'\circ \Theta\}$ cannot be a \texttt{SAGBI} basis (note that $\widehat{\theta_i} \prec \widehat{u'\circ \Theta} = \widehat{v' \circ \Theta}$).

We only need to consider the case $f = u\circ \Theta - v\circ \Theta = 0$ (e.g. if $\Theta = \widehat{\Theta}$). Let $H' = \{u' + x_i, v'\}$. By expression (\ref{Nordstrom2002(13)Analogue}) and $x_i \prec u' = ux_i$, it follows that $H'$ is a \texttt{SAGBI} basis by Lemma \ref{Nordbeck2002Lemma3Analogue}. As above, we obtain a contradiction from $f' = u' \circ \Theta - v' \circ \Theta = \theta_i \in \Bbbk\langle H' \circ \Theta \rangle_{A}$.
\end{proof}

\begin{proposition}\label{Nordbeck2002Proposition3Analogue}
If composition by $\Theta$ commutes with noncommutative {\rm \texttt{SAGBI}} bases computation, then composition by $\Theta$ is compatible with the ordering $\prec$.   
\end{proposition}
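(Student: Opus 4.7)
The plan is to argue by contradiction, in the same constructive spirit as the proof of Proposition \ref{Nordbeck2002Proposition2Analogue}: assuming the failure of ordering-compatibility, I will exhibit a set $H$ which is a {\rm \texttt{SAGBI}} basis while $H\circ\Theta$ is not, contradicting the assumed commutativity. By Proposition \ref{Nordbeck2002Proposition2Analogue}, composition by $\Theta$ is already compatible with nonequality. Suppose, for contradiction, that it fails to be compatible with $\prec$. Then there exist $X_i, X_j \in \operatorname{Mon}(A)$ with $X_i \prec X_j$ yet $X_i \circ \widehat{\Theta} \not\prec X_j \circ \widehat{\Theta}$; combining nonequality with the totality of $\prec$, one obtains the reversed strict inequality $X_j \circ \widehat{\Theta} \prec X_i \circ \widehat{\Theta}$.

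Take $H := \{X_i,\, X_i + X_j\}$. Since $X_i \prec X_j$, we have $\widehat{X_i + X_j} = X_j$, so $\widehat{H} = \{X_i, X_j\}$. Note that $\Bbbk\langle H \rangle_{A} = \Bbbk\langle \{X_i, X_j\}\rangle_{A}$, because $X_j = (X_i + X_j) - X_i$. By Remark \ref{NordbeckRemark2Analogue}, the set $\{X_i, X_j\}$ of standard monomials is a {\rm \texttt{SAGBI}} basis. Given any $0 \neq s \in \Bbbk\langle H\rangle_{A}$, there is therefore a $\{X_i, X_j\}$-monomial $m'$ with $\widehat{s} = \widehat{m'(\{X_i, X_j\})}$; replacing each occurrence of $X_j$ in $m'$ by $X_i + X_j$ yields an $H$-monomial $m$, and identity \eqref{Nordbeck2002(1)Analogue} gives $\widehat{m(H)} = \widehat{m'(\{X_i, X_j\})} = \widehat{s}$, whence $H$ is a {\rm \texttt{SAGBI}} basis by Definition \ref{sagbidefinition}.

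Next, examine $H \circ \Theta = \{X_i \circ \Theta,\, X_i \circ \Theta + X_j \circ \Theta\}$. Since $X_j \circ \widehat{\Theta} \prec X_i \circ \widehat{\Theta}$, expression \eqref{Nordbeck2002(6)analogue} yields $\widehat{X_i \circ \Theta + X_j \circ \Theta} = X_i \circ \widehat{\Theta} = \widehat{X_i \circ \Theta}$, so this pair is critical in $H \circ \Theta$ with $T$-polynomial $-X_j \circ \Theta$, whose leading monomial is $X_j \circ \widehat{\Theta}$. If $H \circ \Theta$ were a {\rm \texttt{SAGBI}} basis, then Theorem \ref{sagbicriterio} together with $\widehat{H \circ \Theta} = \{X_i \circ \widehat{\Theta}\}$ would force $X_j \circ \widehat{\Theta} = (X_i \circ \widehat{\Theta})^k$ for some $k \geq 0$. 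But $k = 0$ is excluded since $X_j \neq 1$ implies $X_j \circ \widehat{\Theta} \neq 1$ (the $\theta_r$ are nonconstant, as noted in item (9) of the setup), while $k \geq 1$ gives $(X_i \circ \widehat{\Theta})^k \succeq X_i \circ \widehat{\Theta} \succ X_j \circ \widehat{\Theta}$ by admissibility of $\prec$; each case produces a contradiction.

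I expect the main technical subtlety to lie in the second paragraph, specifically in verifying that the translation from a $\{X_i, X_j\}$-monomial $m'$ to an $H$-monomial $m$ preserves the leading monomial under the skew multiplication rules of $A$. Since products of standard monomials in an SPBW extension need not themselves be standard, one must invoke identity \eqref{Nordbeck2002(1)Analogue} together with the unique representation described in Remark \ref{notesondefsigampbw}(iv) to correctly track leading monomials through the substitution. Otherwise the argument is a faithful adaptation of the contradiction scheme from the proof of Proposition \ref{Nordbeck2002Proposition2Analogue}.
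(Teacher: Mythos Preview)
Your proof is correct and follows essentially the same approach as the paper. The paper chooses $H=\{u-v,\,v\}$ (with $u\succ v$) and exhibits $u\circ\Theta\in\Bbbk\langle H\circ\Theta\rangle_A$ as the element whose leading monomial cannot be a power of the single element of $\widehat{H\circ\Theta}$, whereas you take $H=\{X_i,\,X_i+X_j\}$ and reach the same obstruction through the $T$-polynomial $-X_j\circ\Theta$; these are cosmetic variants of one argument, and your verification that $H$ is a \texttt{SAGBI} basis is exactly the paper's reasoning (same generated subalgebra, same set of leading monomials as the monomial set $\{X_i,X_j\}$) spelled out in slightly more detail.
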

\begin{proof}
Suppose that composition by $\Theta$ commutes with \texttt{SAGBI} bases computation. Let $u, v \in {\rm Mon}(A)$ two monomials with $u\succ v$. We want to show that $u \circ \widehat{\Theta} \succ v\circ \widehat{\Theta}$. Due to $u \neq v$, Proposition \ref{Nordbeck2002Proposition2Analogue} shows that we cannot have $u \circ \widehat{\Theta} = v \circ \widehat{\Theta}$, which means that we exclude the case $u \circ \widehat{\Theta} \prec v\circ \widehat{\Theta}$.

By Remark \ref{NordbeckRemark2Analogue} we know that $H' = \{u, v\}$ is a \texttt{SAGBI} basis, and due to (\ref{Nordbeck2002(1)Analogue}), $\Bbbk \langle H\rangle_A = \Bbbk \langle H' \rangle_A$, and $\widehat{H} = \widehat{H}'$, we get that $H = \{u - v, v\}$ is a \texttt{SAGBI} basis. Thus, $H \circ \Theta = \{u \circ \Theta - v \circ \Theta, v\circ \Theta\}$ must be also a \texttt{SAGBI} basis.

Consider $u\circ \widehat{\Theta} \prec v \circ \widehat{\Theta}$. It follows that 
$$\widehat{H \circ \Theta} = \{v \circ \widehat{\Theta}\} \quad {\rm and} \quad u\circ \Theta = (u\circ \Theta - v\circ \Theta) + v\circ \Theta \in \Bbbk \langle H \circ \Theta \rangle_A.
$$ 

However, as we saw in the proof of Proposition \ref{Nordbeck2002Proposition2Analogue}, $u \circ \widehat{\Theta} \prec v \circ \widehat{\Theta}$, and so $u \circ \widehat{\Theta} \neq 1$ cannot be expressed as a power of $v \circ \widehat{\Theta}$ which means that $H \circ \Theta$ is not a \texttt{SAGBI} basis. In other words, the assumption $u \circ \widehat{\Theta} \prec v \circ \widehat{\Theta}$ is false, and hence the composition by $\Theta$ is compatible with the ordering $\prec$.
\end{proof}

Finally, we present the most important result of this section that follows from Propositions \ref{Nordbeck2002Proposition1Analogue} and \ref{Nordbeck2002Proposition3Analogue}.

\begin{theorem}\label{Nordbeck2002Theorem4Analogue}
    Composition by $\Theta$ commutes with noncommutative {\rm \texttt{SAGBI}} bases computation if and only if the composition is compatible with the ordering $\prec$.
\end{theorem}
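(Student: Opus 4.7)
The plan is to obtain Theorem~\ref{Nordbeck2002Theorem4Analogue} as an immediate conjunction of the two implications already established in Propositions~\ref{Nordbeck2002Proposition1Analogue} and~\ref{Nordbeck2002Proposition3Analogue}, since the conceptual content has been distributed across the preceding lemmas and propositions precisely to make the final statement a corollary.

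For the sufficiency ($\Leftarrow$), I would invoke Proposition~\ref{Nordbeck2002Proposition1Analogue} directly: compatibility of composition by $\Theta$ with the monomial ordering $\prec$ forces $\Theta$ to commute with \texttt{SAGBI} bases computation. The mechanism underlying that proposition is that an arbitrary critical pair of $H\circ\Theta$ lifts to a critical pair of $H$ via Lemma~\ref{Nordbeck2002Lemma2Analogue}, Theorem~\ref{sagbicriterio} supplies a zero-reducing representation for its $T$-polynomial relative to $H$, and identities (\ref{Nordbeck2002(3)analogue})--(\ref{Nordbeck2002(5)analogue}) together with (\ref{Nordbeck2002(6)analogue})--(\ref{Nordbeck2002(7)analogue}) transport this representation to one for the $T$-polynomial relative to $H\circ\Theta$ while preserving the strict inequality of heights.

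For the necessity ($\Rightarrow$), I would invoke Proposition~\ref{Nordbeck2002Proposition3Analogue}, which handles the direction by contradiction: given $u\succ v$, the case $u\circ\widehat{\Theta}=v\circ\widehat{\Theta}$ is excluded by compatibility with nonequality (Proposition~\ref{Nordbeck2002Proposition2Analogue}), and the case $u\circ\widehat{\Theta}\prec v\circ\widehat{\Theta}$ is excluded by exhibiting the \texttt{SAGBI} basis $\{u-v,v\}$ (a \texttt{SAGBI} basis by Remark~\ref{NordbeckRemark2Analogue} after noting $\{u,v\}$ and $\{u-v,v\}$ have the same leading-term set and generate the same subalgebra) whose image under composition would contain $u\circ\Theta$ without having $\widehat{u\circ\Theta}$ expressible as a product from $\widehat{H\circ\Theta}=\{v\circ\widehat{\Theta}\}$. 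Since both implications are already fully justified in the preceding material, there is no real obstacle remaining; the only task in writing the proof is to cite the two propositions in the correct order, and the theorem follows at once.
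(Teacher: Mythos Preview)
Your proposal is correct and follows essentially the same approach as the paper: sufficiency via Proposition~\ref{Nordbeck2002Proposition1Analogue} (underpinned by Lemma~\ref{Nordbeck2002Lemma2Analogue}) and necessity via Proposition~\ref{Nordbeck2002Proposition3Analogue} (which in turn rests on Proposition~\ref{Nordbeck2002Proposition2Analogue}). The paper additionally points to Lemma~\ref{Nordbeck2002Lemma3Analogue} and a remark from Nordbeck's original \cite[Theorem~4]{Nordbeck2002} in its citation for necessity, but this is just tracing the dependency chain one level deeper into the machinery behind Propositions~\ref{Nordbeck2002Proposition2Analogue} and~\ref{Nordbeck2002Proposition3Analogue}, not a different argument.
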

\begin{proof}
	The sufficiency is guaranteed by Lemma \ref{Nordbeck2002Lemma2Analogue}
 and Proposition \ref{Nordbeck2002Proposition1Analogue}. 
 
 With respect to the proof of the necessity, this follows from Propositions  \ref{Nordbeck2002Proposition2Analogue} and \ref{Nordbeck2002Proposition3Analogue} and the remark presented by Nordbeck \cite[Theorem 4]{Nordbeck2002} applied to Lemma \ref{Nordbeck2002Lemma3Analogue}.
\end{proof}

\section{Conclusions and future work}\label{conclusionsfuturework}

 Regarding the computational development of \texttt{SAGBI} bases, this has been strongly driven in the commutative case as we can see in the literature (e.g.  \cite{Hashemietal2013, KreuzerRobbiano2005, Pfister} and references therein), and recently in Robbiano and Bigatti's paper \cite{RobbianoBigatti2022}. Since Fajardo in his Ph.D. Thesis \cite{Fajardo2018PhD} and related papers \cite{Fajardo2019, Fajardo2022}, \cite[Appendices C, D, E]{Fajardoetal2020} developed the library \texttt{SPBWE.lib} in \texttt{Maple} with the aim of making homological and constructive computations of Gr\"obner bases of SPBW extensions, it is natural to consider the problem of developing a computational approach to the \texttt{SAGBI} basis theory for SPBW extensions following the ideas presented in Section \ref{SAGBIbasesSPBW}.
 
As we said in Section \ref{SAGBIbasesSPBW}, Gallego developed the theory of Gr\"obner bases of SPBW extensions. However, the problem of Gr\"obner bases under composition of these extensions has not been investigated, so a first natural task is to investigate this problem. In fact, recently Kanwal and Khan \cite{KanwalKhan2023} considered the question of \texttt{SAGBI}-Gr\"obner bases under composition. This paper is of our interest in the near future.

Last but not least, related with the topic of Gr\"obner bases, another interesting questions concern the notions of {\em Universal Gr\"obner basis} and the {\em Gr\"obner Walk method}. In the commutative setting, a set $F$ which is a Gr\"obner basis for an ideal $I$ of the commutative polynomial ring $\Bbbk[x_1, \dotsc, x_n]$ with respect to every monomial ordering is called a {\em universal Gr\"obner basis}. From \cite[Exercise 1.8.6(d)]{AdamsLoustaunau1994} we know that every ideal of $\Bbbk[x_1, \dotsc, x_n]$ has a basis of this kind. Since some approaches to this notion of basis in the noncommmutative case have been developed by Weispfenning \cite{Weispfenning1989}, we consider interesting to ask the possibility of these bases in the setting of SPBW extensions. With respect to the Gr\"obner Walk method, this is a basis conversion method proposed by Collart, Kalkbrener, and Mall \cite{Collartetal1997} with the aim of converting a given Gr\"obner basis, respect to a fixed monomial order, of a polynomial ideal $I$ of the commutative polynomial ring $\Bbbk[x_1, \dotsc, x_n]$ to a Gr\"obner basis of $I$ with respect to another monomial order. Since some approaches have been realized to Gr\"obner Walk method in the noncommutative setting (for instance, Evans in his Ph.D.  Thesis \cite{Evans2005} who presented an algorithm for a noncommutative Gr\"obner Walk in the case of conversion between two harmonious monomial orderings), it is important to us to investigate the possibility of this procedure in the setting of Gr\"obner bases of SPBW extensions.

\section{Acknowledgments}

We would like to express our sincere thanks to Professor Oswaldo Lezama for introducing us to this project.


\begin{thebibliography}{}

\bibitem{Apel1988} J. Apel. Gr\"obnerbasen in Nichtkommutativen Algebren und ihre Anwendung. Ph.D. thesis, Universit\"at Leipzig (1988).

\bibitem{AdamsLoustaunau1994} W. W. Adams, P. Loustaunau. An {I}ntroduction to {G}r\"obner {B}ases. Graduate Studies in Mathematics. American Mathematical Society (1994).

\bibitem{Bavula2023} V. V. Bavula. Description of bi-quadratic algebras on 3 generators with PBW basis. {\em J. Algebra} 631 (2023) 695--730.

\bibitem{BellSmith1990} A. D. Bell, S. P. Smith. Some 3-Dimensional Skew Polynomial Rings. University of Wisconsin, Milwaukee (1990). (preprint)

\bibitem{BellGoodearl1988} A. Bell, K. Goodearl. Uniform rank over differential operator rings and {P}oincar\'e-{B}irkhoff-{W}itt extensions. {\em Pacific J. Math.} 131 (1) (1988) 13--37.

\bibitem{BeckerWeispfenning1993} T. Becker, V. Weispfenning. Gr\"obner Bases. A Computational Approach to Commutative Algebra. Graduate Texts in Mathematics, Vol. 141, Springer--Verlag (1993).


\bibitem{Buchberger1965} B. Buchberger. Ein Algorithms zum Auffinden der Basiselemente des Restklassenringes nach einem nulldimensionalen Polynomideal. Ph.D. thesis, University of Innsbruck, Innsbruck, Austria (1965).

\bibitem{BuesoTorrecillasVerschoren} J. Bueso, J. G\'omez-Torrecillas, A. Verschoren. Algorithmic {M}ethods in {N}on-commutative {A}lgebra: {A}pplications to {Q}uantum {G}roups. Dordrecht, Kluwer (2003).

\bibitem{Collartetal1997} M. Collart, M. Kalkbrener, D. Mall. Converting {B}ases with the {G}r\"obner {W}alk. {\em J. Symbolic Comput.} 24(3--4) (1997) 465--469.

\bibitem{CoxLittleOshea2015} D. A. Cox, J. Little, D. O'Shea. Ideals, {V}arieties, and {A}lgorithms. {A}n {I}ntroduction to {C}omputational {A}lgebraic {G}eometry and {C}ommutative {A}lgebra. {F}ourth {E}dition, Undergraduate Texts in Mathematics, Springer Cham (2015).

\bibitem{Evans2005} G. A. Evans. Noncommutative Involutive Bases. Ph.D. thesis, University of Wales, Bangor (2005).

\bibitem{Fajardo2018PhD} W. Fajardo. Extended modules over skew {P}{B}{W} extensions. Ph.D. thesis, Universidad Nacional de Colombia, Bogot\'a, D. C.,  Colombia (2018).

\bibitem{Fajardo2019} W. Fajardo. A {C}omputational {M}aple {L}ibrary for {S}kew {P}{B}{W}  {E}xtensions. {\em Fund. Inform.} 167 (3) (2019) 159--191.

\bibitem{Fajardo2022} W. Fajardo. Right {B}uchberger algorithm over bijective skew {P}{B}{W} extensions. {\em Fund. Inform.} 184(2) (2022) 83--105.

\bibitem{Fajardoetal2020} W. Fajardo, C. Gallego, O. Lezama, A. Reyes, H. Su\'arez, H. Venegas. Skew PBW Extensions. Ring and Module-theoretic Properties, Matrix and Gr\"obner Methods, and Applications. Springer, Cham (2020).

\bibitem{Gallego2015PhD} C. Gallego. Matrix methods for projective modules over $\sigma$-{P}{B}{W} extensions. Ph.D. thesis, Universidad Nacional de Colombia, Bogot\'a, D. C., Colombia (2015).

\bibitem{Gallego2016FG} C. Gallego. Filtered-graded transfer of noncommutative {G}r\"obner bases. {\em Rev. Colombiana Mat.} 50 (1)  (2016) 41--54.

\bibitem{Gallego2016}C. Gallego. Matrix computations on projective modules using noncommutative {G}r\"obner bases. {\em Journal of Algebra, Number Theory: Advances and Applications} 15(2) (2016) 101--139.

\bibitem{GallegoLezama2010} C. Gallego, O. Lezama. Gr\"obner {B}ases for {I}deals of {$\sigma$}-{P}{B}{W} {E}xtensions. {\em Comm. Algebra} 39 (1) (2011) 50--75.

\bibitem{GallegoLezama2017} C. Gallego, O. Lezama. Projective modules and {G}r\"obner bases for skew {P}{B}{W} extensions. {\em Dissertationes Math.} 521 (2017) 1--50.

\bibitem{GoodearlWarfield2004} K. R. Goodearl, R. B. Warfield Jr. An {I}ntroduction to {N}oncommutative {N}oetherian {R}ings. Cambridge University Press. London (2004).

\bibitem{GomezTorrecillas2014} J. G\'omez-Torrecillas. Basic {M}odule {T}heory over {N}on-commutative {R}ings with {C}omputational {A}spects of {O}perator {A}lgebras. In M. Barkatou, T. Cluzeau, G. Regensburger, and M. Rosenkranz, editors, Algebraic and Algorithmic Aspects of Differential and Integral Operators. AADIOS 2012, volume 8372 of Lecture Notes in Computer Science, pages 23--82. Berlin, Heidelberg: Springer (2014).

\bibitem{Gordan1900} P. M. Gordan. Les Invariants Des Formes Binaires. {\em J. Math. Pures Appl.} (9) 5(6) (1900) 141--156.

\bibitem{Pfister} G. M. Greuel, G. Pfister. A Singular Introduction to Commutative Algebra. Second edition. Springer--Verlag Berlin Heidelberg (2008).

\bibitem{Hashemietal2013} A. Hashemi, B. M.-Alizadeh, M. Riahi. Invariant G$^2$V algorithm for computing {S}{A}{G}{B}{I}-{G}r\"obner bases. {\em Sci. China Math.} 56 (9) (2013) 1781--1794.

\bibitem{HavlicekKlimykPosta2000} M. Havl\'i\v{c}ek, A. U. Klimyk, and S. Po\v{s}ta. Central elements of the algebras ${U}'(\mathfrak{so}_m)$ and ${U}(\mathfrak{iso}_m)$. {\em Czech. J. Phys.} 50 (1) (2000) 79--84.

\bibitem{Jordan1990} D. A. Jordan. Down-Up algebras and Ambiskew polynomial rings. {\em J. Algebra} 228 (1) (2000) 311--346.

\bibitem{Jordan2001} D. A. Jordan. The Graded Algebra Generated by Two Eulerian Derivatives. {\em Algebr. Represent. Theory} 4 (3) (2001) 249--275.

\bibitem{KandriRodyWeispfenning1990} A. Kandri-Rody, V. Weispfenning. Non-commutative Gr\"obner bases in algebras of solvable type. {\em J. Symbol. Comput.}  9 (1) (1990) 1--26.

\bibitem{KapurMadlener1989} D. Kapur and K. Madlener. A Completion Procedure for Computing a Canonical Basis for a $\Bbbk$-subalgebra. In Proceedings of the third conference on Computers and Mathematics, pages 1--11. Springer (1989). 

\bibitem{Khanetal2019} M. A. B. Khan, J. A. Khan, M. A. Binyamin. SAGBI Bases in $G$-Algebras. {\em Symmetry} 11 (2) (2019) 221--231.

\bibitem{KanwalKhan2023} N. Kanwal, J. A. Khan. Sagbi-{G}r\"obner {B}ases {U}nder {C}omposition. {\em J. Syst. Sci. Complex.} 36 (2023) 2214--2224.

\bibitem{KreuzerRobbiano2005} M. Kreuzer, L. Robbiano. Computational Commutative Algebra, Vol. 2. Springer (2005).

\bibitem{LezamaMarin2009} O. Lezama and V. Mar\'in. Una aplicaci\'on de las bases SAGBI - Igualdad de Sub\'algebras (caso cuerpo). {\em Revista Tumbaga} 1 (4) (2009) 31--41.

\bibitem{LezamaReyes2014} O. Lezama and A. Reyes. Some {H}omological {P}roperties of {S}kew {P}{B}{W} {E}xtensions. {\em Comm. Algebra} 42 (3) (2014) 1200--1230.

\bibitem{Levandovskyy2005} V. Levandovskyy. Non-Commutative Computer Algebra for Polynomial Algebras: Gr\"obner Bases, Applications and Implementation. Ph.D. thesis, Universit\"at Kaiserslautern (2005).

\bibitem{Li2002} H. Li. Noncommutative {G}r\"obner {B}ases and {F}iltered-{G}raded {T}ransfer. Lect. Notes in Math., Springer Berlin Heidelberg (2002).

\bibitem{Miller1998} L. Miller. Effective Algorithms for Intrinsically Computing SAGBI-Gr\"obner Bases in a Polynomial Ring over a Field. In B. Buchberger and F. Winkler (eds.), Gr\"obner Bases and Applications, London Math. Soc. Lecture Note Ser. 251, pages 421--433 (1998).

\bibitem{Nordbeck1998} P. Nordbeck. Canonical {S}ubalgebraic {B}ases in {N}on-commutative {P}olynomial {R}ings. In  Proceedings of the 1998 International Symposium on Symbolic and Algebraic Computation, pages 140--146  (1998).

\bibitem{NordbeckPhD2001} P. Nordbeck. Canonical {B}ases for {A}lgebraic {C}omputations. Ph.D. thesis, Lund Institute of Technology, Lund University, Sweeden, (2001).

\bibitem{Nordbeck2002}P. Nordbeck. S{A}{G}{B}{I} {B}ases {U}nder {C}omposition. {\em J. Symbolic Comput.} 33 (1) (2002) 67--76.

\bibitem{Ore1933} O. Ore. Theory of non-commutative polynomials. {\em Ann. Math. 2} 34 (3) (1933) 480--508. 

\bibitem{Reyes2014} A. Reyes. Jacobson's conjecture and skew PBW extensions. {\em Rev. Integr. Temas Mat.} 32 (2) (2014) 139--152.

\bibitem{ReyesSuarezMomento2017} A. Reyes, H. Su\'arez. Bases for {Q}uantum {A}lgebras and {S}kew {P}oincar\'e-{B}irkhoff-{W}itt {E}xtensions. {\em Momento} 54 (1) (2017) 54--75.

\bibitem{ReyesSuarez20173dimentional} A. Reyes, H. Su{\'{a}}rez. PBW Bases for Some 3-Dimensional Skew Polynomial Algebras. {\em Far East J. Math. Sci.} 101 (6) (2017) 1207--1228.

\bibitem{RobbianoBigatti2022} L. Robbiano, A. M. Bigatti. Saturations of subalgebras, {S}{A}{G}{B}{I} bases, and {U}-invariants. {\em J. Symbolic Comput.} 109 (2022) 259--282.

\bibitem{RobbianoSweedler1990} L. Robbiano, M. Sweedler. Subalgebra bases. In W. Bruns and A. Simis (eds.), Commutative algebra, Proc. Workshop Salvador 1988, Lect. Notes in Math. 1430, pages 61--87. Springer, Berlin (1990).

\bibitem{ShannonSweedler1989} D. Shannon, M. Sweedler. Using Gr\"obner bases to determine algebra membership, split surjective algebra homomorphisms determine birational equivalence. {\em J. Symbolic Comput.} 6 (2-3) (1989) 267--273.

\bibitem{Seiler2010} W. M. Seiler. Involution. The Formal Theory of Differential Equations and its Applications in Computer Algebra. Algorithms and Computation in Mathematics, Vol. 24. Springer, Berlin (2010).

\bibitem{SuarezPhDThesis2023} Y. Su\'arez. Involutive and SAGBI bases for skew PBW extensions. Ph.D. thesis, Universidad Nacional de Colombia, Bogot\'a, D. C., Colombia (2023).

\bibitem{Weispfenning1989} V. Weispfenning. Constructing universal {G}r\"obner bases. In L. Huguet and A. Poli (eds), Applied Algebra, Algebraic Algorithms and Error-Correcting Codes. AAECC 1987, volume 356 of Lecture Notes in Computer Science, pages 13--70. Springer, Berlin, Heidelberg (1989).

\bibitem{Zhedanov1991} A. S. Zhedanov. \textquotedblleft Hidden symmetry\textquotedblright\ of Askey-Wilson polynomials. {\em Theoret. and Math. Phys.} 89 (2) (1991) 1146--1157.

\end{thebibliography}
\end{document}